\newcommand{\R}{{ \mathbb{R}  }}
\newcommand{\bke}[1]{\left( #1 \right)}
\newcommand{\bkt}[1]{\left[ #1 \right]}
\newcommand{\bket}[1]{\left\{ #1 \right\}}
\newcommand{\norm}[1]{\left\Vert #1 \right\Vert}
\newcommand{\calK}{{\mathcal K}}
\newcommand{\calI}{{ \mathcal I }}
\newcommand{\calN}{{\mathcal N}}
\newcommand{\calS}{{ \mathcal S  }}
\newcommand{\bbr}{\mathbb R}
\newcommand{\pa}{\partial}
\newcommand{\vp}{\varphi}
\newcommand{\om}{{ \omega  }}
\newcommand{\Om}{{ \Omega  }}
\newcommand {\la}{\lambda}
\newcommand{\na}{\nabla}
\newcommand {\ga}{\gamma}
\newcommand {\al}{\alpha}
\newcommand {\be}{\beta}
\newcommand{\dds}{\frac{\pa}{\pa s}}
\newcommand{\ddp}{\frac{\pa }{\pa \vp}}
\newcommand{\ddt}{\frac{d}{dt}}
\newcommand{\Del}{\Delta}
\newcommand{\br}{\mathbf r}
\newcommand{\ep}{\epsilon}
\newcommand{\iom}{{L^{\infty}(\Om)}}
\newcommand{\into}{\int_{\Om}}
\newcommand{\tzp}{|\tilde z|^{p}}
\newcommand{\tzpp}{|\tilde  z|^{p-1}}
\newcommand{\tz}{\tilde z}
\newcommand{\emu}{e^{\mu t}}
\begin{document}
\bibliographystyle{plain}

\newtheorem{defn}{Definition}
\newtheorem{lemma}{Lemma}
\newtheorem{proposition}{Proposition}
\newtheorem{theorem}{Theorem}
\newtheorem{assumption}{Assumption}
\newtheorem{cor}{Corollary}
\newtheorem{remark}{Remark}
\numberwithin{equation}{section}
\newenvironment{pfthm1}{{\par\noindent\bf
           Proof of Theorem~\ref{GETHM}. }}{\hfill\fbox{}\par\vspace{.2cm}}
\newenvironment{pfthm2}{{\par\noindent\bf
           Proof of Theorem~\ref{GETHM1}. }}{\hfill\fbox{}\par\vspace{.2cm}}


\title[Nonlocal adhesion models for two cancer cell phenotypes]{
Nonlocal adhesion models for two cancer cell phenotypes in a multidimensional bounded domain}


%
%
%
 \subjclass[2010]{ 92C17, 35Q92, 35K51}%
\keywords{ cell-cell adhesion, non-local models, no-flux boundary conditions, global existence, semigroups
}

\author{Jaewook Ahn}%
\address{Department of Mathematics, Dongguk University, Seoul, 04620 Republic of Korea}%
\email{ jaewookahn@dgu.ac.kr}
\author{Myeongju Chae}%
\address{Schoo of applied mathematics and computer engineering, Hankyong University, Anseong, 17579 Republic of Korea}%
\email{mchae@hknu.ac.kr}
\author{Jihoon Lee}%
\address{Department of Mathematics, Chung-Ang University, Seoul, 06974 Republic of Korea}%
\email{jhleepde@cau.ac.kr}

\begin{abstract}
Cell-cell adhesion is an inherently nonlocal phenomenon. Numerous partial differential equation models with nonlocal term have been recently presented to describe this phenomenon, yet the mathematical properties of nonlocal adhesion model are not well understood. Here we consider a model with two kinds of nonlocal cell-cell adhesion, satisfying no-flux conditions in a multidimensional bounded domain. We show global-in-time well-posedness of the solution to this model and obtain the uniform boundedness of solution.
  \end{abstract}
  \maketitle

\section{Introduction}
In this paper we consider  nonlocal cell-cell adhesion models on a multidimensional bounded domain. Cellular adhesion is a fundamental feature of multicellular organisms with relevance for e.g., embryogenesis, wound healing and self-organization.  The regulation of cellular adhesion is critical, too, for  cancer growth and spread.  \\  
\indent
Mathematical models of cancer invasion have been widely studied for decades; the proliferation and movement of tumour cells are governed by  random diffusion, aggregations and reactions which are often of logistic type \cite{ACN,BLM,CA,CL,PB}.  In most cancer migration models, aggregations arise from the cell-cell and cell-ECM (extracellular matrix) adhesions, movement of cells in response to stimulii of diffusing chemicals (chemotaxis), and movement of cells in response to non-diffusing environmental factor such as ECM (haptotaxis). There have been numerous mathematical analysis studies of parabolic  or parabolic-ellipic-ODE system consisting of diffusion, aggregation and reaction for describing tumour dynamics; they addressed local and global well-posedness, blow-up, asymptotic behavior etc. on the whole space or space with boundary. {See \cite{LMR10,MR08,SSW14,TW11,TM201,TM20,TW08} and references therein}. 
\\
\indent
In this paper we study a two-populations cancer model \eqref{MODEL01} set in a multidimensional bounded domain and focused on nonlocal cell-cell adhesion. The first successful continuum model of cellular adhesion was proposed by Armstrong et al. in \cite{APS06}, where
 they introduced a nonlocal integral term  to describe the adhesive forces between cells. The basic single population model on the real line, for instance, is derived to be 
\begin{align}\label{Single} \pa_t u = u_{xx}- \al \left( u(x,t) \int_{-R} ^{R} g(u(x+r)) \om(r) dr \right)_x 
\end{align}
where $\al$ is the strength of cell adhesion, $g(u)$ describes the nature of the adhesion force, $\om(r)$ is a function describing the direction and magnitude of the adhesion force, and $R$  sensing radius of a cell.
In the same paper a two-dimensional version of \eqref{Single} involving two cell populations was also proposed with numerical simulations supporting the active cell-sorting process from the  randomly distributed mixture. The model was the first to reproduce Steinberg's cell segregating experiment \cite{Stein}, which is a classical result in developmental biology. 
\\
\indent
 The regulation of cellular adhesion is critical in cancer formations as well. Naturally  \eqref{Single} has been extensively used  to model cancer cell invasion and developmental processes \cite{ANCH,APS09,BCE17,DTGC,GC08,GP10,PBSG,PAS10,SGAP}. Cauchy problems were studied in \cite{ANCH,EPSZ,HB19,HPW17,SGAP} {\it e.g.}, among which \cite{HPW17} obtained general results on local and global well-posedness of classical solutions to a version of \eqref{Single} in $\bbr^n$. 
\\
 \indent
 On the other hand, there are only a few studies of the adhesion model posed on the bounded domain \cite{BCE17,EPSZ, HB19}. Related to our result,  Hillen and Buttensch\"on  {were} the first to consider the well-posedness of the  initial-boundary problem of \eqref{Single} with the {explicit} sensing domain under the independent boundary condition in \cite{HB19}, where they constructed a bounded solution globally in time. Recently, Eckardt, Painter, Surulescu and Zhigun considered more general nonlocal models involving adhesion or nonlocal chemotaxis under the nonlocal Robin boundary condition in \cite{EPSZ} (For more complete review of the recent advancements of the nonlocal models, please see \cite{CPSZ}). It seems more realistic to consider the cell adhesion in a bounded domain, since cells interact with the boundaries of the regions in which they are contained.
  In this paper we extend the work from \cite{HB19} to a multidimensional bounded domain with two kinds of boundary conditions  satisfying a no-flux requirement.\\
\indent
 As mentioned in \cite{HB19}, there is another class of non-local models so called {\it aggregation equation}, where the non-local term has a singular interaction kernel in general. 
The aggregation equation arises as a gradient flow of a potential, and the well-posedness and blow-up features of the equations  are extensively studied  {\it{e.g.}} in  \cite{BLR, BL}. Recent  studies of such equation on a bounded domain can be found in \cite{FK,WS}. \\
\indent
Let  $u(x,t)$ and $v(x,t)$ denote the two types cancer cell population densities at spatial location $x\in \Omega$ and time $t$, where  $\Omega \subset\R^{n}$, $n\ge2$, is a bounded domain.  Following the simplifying assumptions in \cite{APS06}, we assume that the cell adhesion force is linearly proportional to the population density and  that  both cell types have the same sampling radius $R$; we set the velocities of nonlocal adhesions between two population as 
 \begin{align}\label{MODEL01}
 \begin{aligned}
\mathcal{K}[u,v](x,t)& =\int_{E(x)}\bkt{ M_{11}u(x+y,t)+  M_{12}v(x+y,t)      }\omega(y)dy,
\\
\mathcal{S}[u,v](x,t)& =\int_{E(x)}\bkt{ M_{21}u(x+y,t)+  M_{22}v(x+y,t)      }\omega(y)dy,
\\
 E(x)&: \mbox{ sensing domain depending on $x$, to be specified later, }
 \\
     \omega&: =(\omega_{1},\cdots, \omega_{n}) \mbox{ for } \, \omega_{i} \,\mbox{  bounded}.
\end{aligned} \end{align}
The positive constants $M_{11}$  and $M_{22}$  represent the self-adhesive strength of populations $u$  and $v$, respectively, and the positive constants $M_{12}$ and $M_{21}$ represent   the cross-adhesive strengths between the populations.  \\
\indent 
Below, we set the two initial-boundary problems on $u(x, t)$ and $v(x,t)$ according to boundary conditions.\\
{\bf I.} Nonlocal Robin boundary condition\\
Let  $\Omega$ be a $C^2$ smooth bounded domain and  $\calK[u, v]$, $\calS[u,v]$ be defined as in \eqref{MODEL01}. We consider the system
\begin{equation}\label{MODEL00}
 \left\{
\begin{array}{ll}
\vspace{1.5mm}
\partial_t u  - \Delta u= -\nabla \cdot (u \,\mathcal{K}[u,v]     )-mu +\displaystyle\frac{\lambda}{k} u(k-(u+v)), \qquad& x\in \Omega,\,\, t>0, \\
\vspace{1.5mm}
\partial_t v - \Delta v= -\nabla \cdot (v \,\mathcal{S}[u,v]     )+mu +\displaystyle\frac{\mu}{k} v(k-(u+v)), \qquad& x\in \Omega,\,\, t>0, \\
   \partial_{\nu} u =u\mathcal{K}[u,v]\cdot \nu,\quad \partial_{\nu} v=v\mathcal{S}[u,v]\cdot \nu, \qquad& x\in\partial \Omega,\,\, t>0, \\
u(x,0)=u_{0}(x),\,\, v(x,0)=v_{0}(x),\quad &x\in \Omega,
\end{array}
\right. 
\end{equation}
where $\nu(\cdot)$  is the $C^1$-smooth unit outward normal vector field to $\partial\Omega$. 
 The positive constants $m,k,\lambda$, and $\mu$ denote the mutation rate,  the carrying capacity, the growth rate of $u$, and the growth rate of $v$. The sensing domain $E(x)$ is given by 
  \begin{equation}\label{sensing1} E(x) = \bket{ y\in \R^{n} \,|\, x+y\in\Omega,\,\,|y|<R },  \end{equation}where $R$ is a fixed sensing radius. Moreover we assume 
 \begin{equation}\label{omega1}
\omega=(\omega_{1},\cdots, \omega_{n}) \mbox{  for }  \omega_{i} \mbox{  bounded and smooth. }
\end{equation}
\indent
  The equations \eqref{MODEL00} can be seen as a similar but simplified version of the model considered in \cite{BCE17}, 
describing dynamics of two type {of} cancer cells on a bounded domain in $\bbr^n$.  In \cite{BCE17} the terms $\mp m u$ refer that healthy or early stage cancer cells  mutate into later stage cells at a constant rate $m$. See \eqref{GENMODEL} in Remark \ref{Rmk1} for a full equations considered in \cite{BCE17}. 
\indent
On the other hand the similar reaction terms in \eqref{MODEL00} can be found in two-phenotypes cancer model undergoing
 {\it  epithelial -mesenchymal transition }(EMT). In this case we interpret $u$ and  $v$ as the density of epithelial tumor cells and the density of mesenchymal  tumor cells respectively.  The terms $\mp m u$ stand for EMT that epithelial phenotype cells undergo to acquire the ability to migrate into surroundings \cite{YY}. 
 Following this kind of view, the equation \eqref{MODEL00} should be understood to consider a local cancer cell invasion step according to terms in \cite{FLBC} and assumes the EMT takes place only. The logistic competition terms $u(k-(u+v))$ and $v(k-(u+v))$ describe the production of $u$ and $v$ are according to logistic law and  they compete for free space. Two-species cancer model with haptotactic invasion undergoing EMT is analyzed in \cite{DL20, GI}. For more details on the model we  refer to references therein.

Note that we require the total flux  zero on the boundary in \eqref{MODEL00}, which leads to a nonlocal boundary condition of Robin type,
\begin{equation}\label{totalzero} \partial_{\nu} u -u\mathcal{K}[u,v]\cdot \nu=0,\quad \partial_{\nu} v-v\mathcal{S}[u,v]\cdot \nu=0, \qquad x\in\partial \Omega,\,\, t>0. 
\end{equation}
  One way to attain the total flux zero condition is to assume for the solution of \eqref{MODEL00} to satisfy
\begin{align}\label{ind1}
&\pa_{\nu} u = 0, \quad \pa_{\nu} v = 0 \quad x\in \pa\Om, \\ \label{ind2}
 &\calK[u,v] =0, \quad \calS[u,v] =0 \quad x\in \pa\Om.
\end{align}
on the boundary, which is the boundary condition used in \cite{BCE17, HB19}. The first condition is the usual  Neumann  zero condition given on the solutions $u$ and $v$, however, \eqref{ind2} is the condition imposed to the nonlocal operators $\calK$ and $\calS$. 
Following \cite{HB19}, we call \eqref{ind1}, \eqref{ind2} {\it independent} or {\it zero-zero flux condition}.
An independent case allows the constant  solution 
\begin{equation}\label{constant}
u\equiv 0, \quad v\equiv k.
\end{equation}
  which is one of two non-negative constant solutions. The other one is $(0, 0)$.\footnote{ There is the other steady state, 
$u^{*}=-\frac{\mu}{\la}v^{*}, v^{*}=\frac{k(1-\frac{m}{\la})}{1-\frac{\mu}{\la}}$, which 
 are of different signs, hence unrealistic.} The $\mp m u$ terms in \eqref{MODEL00} suggest that  the later stage population dominates the total cell population if the mutation rate $m$ is large. The linear asymptotic stability around $(0, k)$ is considered in Appendix. The linear stability analysis for the related one dimensional model is performed in \cite{BE}.\\
 \indent
   We shall study the  {\it independent } case  with an explicit example of the sensing domain  $E(x)$ satisfying  $ |E(x)| = 0 $
 as $x$ approaches to $\pa \Om$ by which the condition \eqref{ind2} is assured. Note that the volume  of \eqref{sensing1} cannot vanish on the boundary.  We formulate the second initial-boundary problems under \eqref{ind1}--\eqref{ind2} as follows.
\\
{\bf II.} Zero-zero flux condition\\
 \indent Let $\Omega $ be the open ball of radius $L$, $B_L(0)$, and  $\calK[u, v]$, $\calS[u,v]$ be defined as in \eqref{MODEL01}. We have \begin{equation}\label{MODEL11}
 \left\{
\begin{array}{ll}
\vspace{1.5mm}
\partial_t u  - \Delta u= -\nabla \cdot (u \,\mathcal{K}[u,v]     )-mu +\displaystyle\frac{\lambda}{k} u(k-(u+v)), \qquad& x\in \Omega,\,\, t>0, \\
\vspace{1.5mm}
\partial_t v - \Delta v= -\nabla \cdot (v \,\mathcal{S}[u,v]     )+mu +\displaystyle\frac{\mu}{k} v(k-(u+v)), \qquad& x\in \Omega,\,\, t>0, \\
    \pa_{\nu} u = 0, \quad \pa_{\nu} v = 0, \quad
 \calK[u,v] =0, \quad  \calS[u,v] =0, \qquad& x\in \pa \Om\,\, t>0, 
  \\
u(x,0)=u_{0}(x),\,\, v(x,0)=v_{0}(x),\quad &x\in \Omega,
\end{array}
\right. 
\end{equation}
where $m, k, \mu,\la$ are same as before in  case {I}, and  $\nu(\cdot)$ denotes the unit outward normal vector to $\partial\Omega$.  The sensing domain $E(x)$ is given for $0<R<L$ by 
\begin{equation}\label{sensing2}  
E(x) = B_R(0)  \quad \mbox{ for } |x| < L-R, \quad E(x)= B_{L-|x|}(0) \mbox{ for }  L-R \le |x| < L,
\end{equation}
and we assume 
\begin{equation}\label{omega2}
 \omega(x):=\displaystyle\frac{x}{|x|}w(|x|) 
 \end{equation}
  for  $ w\in \mathcal{C}^{\infty}_{0}(\Omega)$  non-negative. 
 In \eqref{sensing2}  we choose  $E(x)$ as simple as possible  to satisfy the following property ; when $x$ is away from the boundary of the domain, $E(x)$ is $B_R(0)$  as was set for two dimensional model  in \cite{APS06}  .  When $x$ is close to the boundary, it shrinks to a smaller region such that $ x + {\mathbf r }\in  \Om$ for ${\mathbf r} \in E(x)$ and  $|E(x)| = 0$ as $x$ reaches to the boundary. 
  The choice of varying integration domain $E(x)$  affects the extent of regularity of the adhesion terms $\calK[u,v]$ and $\calS[u,v]$. In Lemma \ref{LEMKS} and Lemma \ref{LEMKS1}, we only show  $\calK[u,v]$ and $\calS[u,v]$ to be Lipschitz continuous however  smooth $u$, $v$, $\omega$ are.  
\footnote{ For case {II}, we could change the shrinking rate and shape of $E(x)$ as $x$ approaches the boundary  so that the regularity of adhesion terms is possibly worse.}\\
  \indent In case {I} the nonlocal nonlinear Robin type boundary condition as well as the restricted regularity of  $\calK[u,v]$ and $\calS[u,v]$ cause  difficulty in constructing local-in-time strong solutions directly by iterations. For the local well-posedness we take two steps;  we first construct the  generalized solution 
   relying on the semi-group theory for parabolic boundary value problem  found in  Amann's seminar works  \cite{A86,AM93}.
   In particular we  introduce a certain extension of the unit outward normal vector field, and employ the {\it generalized variation-of-constants formula} for this case. See Section~\ref{SEC21}, and  Section~\ref{SEC22}  for details. A similar construction can also be found in \cite{DGM-RS10,DM-RST10}. We can show the generalized solutions are indeed strong and satisfy the maximal regularity estimates employing the result of Denk-Hieber-Pr\"uss \cite{DHP07}.  The global well-posedness follows from  several  {\it a priori } estimates and Moser-Alikakos type estimate. 
\\
\indent
{In the zero-zero flux case the diffusive flux and the adhesion flux are independently zero on the boundary. 
The global well-posedness is obtained in a standard way though $\calK[u,v]$ and $\calS[u,v]$ are less regular than the adhesion terms in the nonlocal Robin type boundary case due to the shrinking sensing domain $E(x)$. As mentioned earlier, this case allows the constant steady state $(0,0)$ and $(0,k)$. We provide a linear stability analysis in the Appendix, where $(0,0)$ is found linearly unstable, and $(0,k)$ is linearly asymptotically stable if $m> \mu$.  Also we 
find that a Lyapunov type inequality holds when $v$ has no adhesion term and $\mu> \la$, see Remark $1$.}
\\
\indent
We are ready to state the main results in this paper. We first provide the global strong solvability   for case {I}.
\begin{theorem}\label{GETHM}
Let $\Omega \subset\R^{n}$, $n\ge2$, be an open bounded domain with $C^2$ boundary $\pa \Omega$. Suppose that the non-negative initial conditions $u_{0}$ and $v_{0}$ belong to $W^{2,p}(\Omega)$, $n<p <\infty$ and satisfy the compatibility condition
\[
\partial_{\nu}u_{0}-u_0\mathcal{K}[u_0,v_0]\cdot \nu=0,\qquad \partial_{\nu}v_{0}-v_0\mathcal{S}[u_0,v_0]\cdot \nu=0,\qquad\quad x\in\partial{\Omega}.
\]
Then,  \eqref{MODEL00}--\eqref{omega1}   admits a unique non-negative strong solution $(u,v)$ such that 
\[
u,v\in C([0,t);W^{1,p}(\Omega))\cap W^{1,p}(0,t; L^{p}(\Omega))\cap L^{p}(0,t; W^{2,p}(\Omega)), \qquad  t>0.
\]
Moreover, the solution $(u,v)$ has a boundedness property
\[
\sup_{t>0}( \| u(\cdot,t)   \|_{L^{\infty}(\Omega)}+ \| v(\cdot,t)   \|_{L^{\infty}(\Omega)}   )\le C.
\]
\end{theorem}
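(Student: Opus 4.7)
My plan is to proceed in three stages: (i) local-in-time well-posedness via Amann's semigroup theory for parabolic problems with nonlinear Robin-type boundary conditions, upgraded to a strong solution with maximal $L^p$ regularity; (ii) non-negativity and uniform a priori bounds in $L^1$, then in $L^p$ for all $p<\infty$; (iii) a Moser--Alikakos iteration to pass to $L^\infty$ and thereby to global existence.

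For stage (i), the principal difficulty is the nonlinear, nonlocal Robin boundary condition $\pa_\nu u = u\,\calK[u,v]\cdot\nu$. I would extend $\nu$ to a smooth vector field $N$ on $\overline\Om$ (as in \cite{DGM-RS10,DM-RST10}) so that the condition can be recast as a linear Robin problem with spatially Lipschitz coefficients once $\calK[u,v]$ and $\calS[u,v]$ are frozen. Fixing a reference pair $(\tilde u,\tilde v)$ in a closed ball of $C([0,T];W^{1,p}(\Om))$ with $p>n$, I feed the coefficients $\calK[\tilde u,\tilde v]$, $\calS[\tilde u,\tilde v]$ into the generalized variation-of-constants formula of Amann \cite{A86,AM93} to obtain a generalized solution. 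A contraction argument on a small time interval, using the Lipschitz dependence of $\calK,\calS$ (from the earlier lemmas) and of the reaction on $(u,v)$, yields a unique fixed point. This generalized solution is then promoted to a strong solution in $W^{1,p}(0,T;L^p)\cap L^p(0,T;W^{2,p})$ by applying the Denk--Hieber--Pr\"uss maximal $L^p$-regularity theorem \cite{DHP07} to the linearized problem at the fixed point; Sobolev embedding $W^{2,p}\hookrightarrow C^1$ for $p>n$ makes the nonlinear boundary condition pointwise meaningful and preserves the compatibility condition.

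For stage (ii), non-negativity follows from testing each equation with $u_-:=\max(-u,0)$ and $v_-$: the total-flux-zero condition \eqref{totalzero} causes the boundary contributions from $-\Del u$ and $\na\cdot(u\calK)$ to cancel, while the transfer term $+mu$ in the $v$-equation is sign-preserving once $u\ge 0$ is established. For the bounds, summing the two equations and integrating over $\Om$ yields
\begin{equation*}
\ddt \into (u+v)\,dx \;=\; \frac{1}{k}\into(\la u+\mu v)\bigl(k-(u+v)\bigr)\,dx,
\end{equation*}
and the logistic right-hand side produces a uniform-in-$t$ $L^1$ bound on $u+v$. Testing with $(u+v)^{p-1}$ and controlling the adhesion drift via the pointwise estimate $\|\calK[u,v]\|_{\iom}+\|\calS[u,v]\|_{\iom}\le C(\|u\|_{\oom}+\|v\|_{\oom})$ (coming from boundedness of $\omega$), the super-quadratic logistic sink $-\frac{C}{k}\into (u+v)^{p+1}$ absorbs the drift contribution after a Young-inequality split, delivering uniform $L^p$ bounds for every finite $p$.

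For stage (iii), a Moser--Alikakos iteration with $p_k=2^k$ and careful bookkeeping of the constants--again leaning on the logistic $-\,(u+v)^{p+1}$ term as the primary absorber--produces an $L^\infty$ bound whose constant does not blow up as $k\to\infty$. Combined with the standard blow-up criterion from stage (i), namely that $T_{\max}<\infty$ would force $\|u(\cdot,t)\|_{\iom}+\|v(\cdot,t)\|_{\iom}\to\infty$ as $t\to T_{\max}$, this forces $T_{\max}=\infty$ and gives the required uniform boundedness. The hardest step will be stage (i): the nonlinear, nonlocal Robin condition precludes a direct Picard iteration in Sobolev spaces, and justifying the generalized variation-of-constants formula within the Amann framework together with the normal-extension device is the pivotal technical maneuver on which the rest of the proof rests.
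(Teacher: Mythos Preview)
Your plan is essentially the paper's: Amann's generalized variation-of-constants formula with a smooth extension $\mathcal{N}$ of $\nu$ for local existence, the Denk--Hieber--Pr\"uss theorem for the strong-solution upgrade, testing with negative parts for non-negativity, $L^1$ control from the logistic structure, and a Moser--Alikakos iteration for $L^\infty$. Two points need sharpening. First, the blow-up alternative that falls directly out of stage~(i) is in $W^{1,p}$, not $L^\infty$; the paper devotes a separate lemma (Lemma~\ref{BUCLEM}) to showing that a uniform $L^\infty$ bound on $(u,v)$ feeds back through the integral formula \eqref{INTF} and the smoothing estimate \eqref{NONEST} to control $\|u\|_{W^{1,p}}+\|v\|_{W^{1,p}}$, and you will need that bootstrap before you can close the argument. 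Second, in stage~(iii) the logistic sink $-\int(u+v)^{p+1}$ cannot be the ``primary absorber'' in the Moser recursion: absorbing $Cp^2\int(u+v)^{p}$ into it via Young's inequality costs a constant of order $p^{p}$, which destroys the limit $p\to\infty$. The paper simply drops the logistic term (it has the favorable sign) and instead uses the dissipation $\int|\nabla w^{p/2}|^2$ together with the Gagliardo--Nirenberg inequality to obtain the doubling recursion $\sup_t\int w^{p_k}\le \delta_k\bigl(\sup_t\int w^{p_{k-1}}\bigr)^2$ with $\delta_k$ only polynomial in $p_k$; that is the mechanism you should invoke. Your decision to work with $w=u+v$ rather than bounding $u$ first and then $v$, as the paper does, is a harmless simplification once non-negativity is in hand.
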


We next state the global strong solvability    for case {II}.
\begin{theorem}\label{GETHM1}
Let $\Omega\subset\R^{n}$, $n\ge2$ be an open ball of radius $L$, $B_L(0)$. Suppose that the non-negative initial data $u_{0}$ and $v_{0}$ belong to $W^{2,p}(\Omega)$, $n<p<\infty$, and satisfy the compatibility condition
\[
\partial_{\nu}u_{0}=\partial_{\nu}v_{0}=0,\qquad x\in\partial{\Omega}.
\]
Then,  \eqref{MODEL11}--\eqref{omega2} admits a unique non-negative strong solution $(u,v)$ such that  
\[
u,v\in C([0,t);W^{1,p}(\Omega))\cap W^{1,p}(0,t; L^{p}(\Omega))\cap L^{p}(0,t; W^{2,p}(\Omega)), \qquad  t>0.
\]
Moreover, the solution $(u,v)$ has a boundedness property
\[
\sup_{t>0}( \| u(\cdot,t)   \|_{L^{\infty}(\Omega)}+ \| v(\cdot,t)   \|_{L^{\infty}(\Omega)}   )\le C.
\]
\end{theorem}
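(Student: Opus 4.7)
The plan is to exploit the fact that, thanks to the shrinking sensing domain \eqref{sensing2} and the assumption $w\in\mathcal{C}^\infty_0(\Omega)$ in \eqref{omega2}, the identities $\calK[u,v]=\calS[u,v]=0$ on $\partial\Omega$ hold automatically for any reasonably regular $(u,v)$. Consequently, the boundary data in \eqref{MODEL11} collapse to the homogeneous Neumann condition $\pa_\nu u = \pa_\nu v = 0$ alone, and the system reduces to a standard semilinear parabolic system in divergence form with no-flux boundary data. This places us in the classical $L^p$-maximal regularity framework for the Neumann Laplacian, so the Amann semigroup machinery used for case~I can be bypassed.

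First I would construct a local-in-time strong solution by a fixed-point argument. Fix $p\in(n,\infty)$, a small $T>0$, and the closed set
\[
X_T = \bket{(u,v)\in C([0,T];W^{1,p}(\Omega))^2 : (u,v)|_{t=0}=(u_0,v_0),\ \sup_t\|(u,v)\|_{W^{1,p}}\le M },
\]
and define $\Phi:X_T\to X_T$ by letting $(u,v)=\Phi(\bar u,\bar v)$ solve the linear Neumann heat equations with transport velocities $\calK[\bar u,\bar v],\,\calS[\bar u,\bar v]$ and logistic/mutation source terms frozen from the argument. By the Lipschitz continuity of $\calK,\calS$ established in Lemma~\ref{LEMKS1}, together with $L^p$-maximal regularity for the Neumann Laplacian and the embedding $W^{2,p}\hookrightarrow W^{1,\infty}$ (valid since $p>n$), one checks that $\Phi$ is a contraction on $X_T$ for $T$ sufficiently small. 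This yields a unique maximal strong solution on $[0,T_{\max})$ in the claimed regularity class. Non-negativity follows by testing with $u_-:=\max(-u,0)$ and $v_-$: the adhesive drift integrates by parts with no boundary contribution (because $\calK\cdot\nu=\calS\cdot\nu=0$ on $\pa\Omega$), and a Gr\"onwall estimate on $\|u_-\|_{L^2}^2+\|v_-\|_{L^2}^2$ forces these to remain zero, with the cross term $+mu\ge 0$ only helping.

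For global existence and uniform boundedness I would run a three-step cascade. \textbf{Step 1:} integrating the sum of the two equations, using that $w$ is compactly supported inside $\Omega$ so the divergence terms contribute nothing, together with the logistic dissipation $\tfrac{\lambda}{k}u(k-(u+v))+\tfrac{\mu}{k}v(k-(u+v))$, gives a uniform-in-time bound $\|u(t)\|_{L^1}+\|v(t)\|_{L^1}\le C$. \textbf{Step 2:} this $L^1$ bound instantly upgrades to the pointwise bound $\|\calK[u,v]\|_{L^\infty}+\|\calS[u,v]\|_{L^\infty}\le C$, since $E(x)$ is bounded and $\omega$ is bounded. \textbf{Step 3:} testing against $u^{p-1}$ and $v^{p-1}$ and absorbing the adhesive flux into $\tfrac{4(p-1)}{p^2}\|\nabla u^{p/2}\|_{L^2}^2$ via Young's inequality produces
\[
\ddt\|u\|_{L^p}^p+\|u\|_{L^p}^p\le C_p,
\]
and analogously for $v$, hence uniform $L^p$ bounds for every finite $p$; a Moser--Alikakos iteration then promotes this to a uniform-in-time $L^\infty$ bound.

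The main obstacle is that $\calK[u,v]$ and $\calS[u,v]$ are only Lipschitz in $x$, because the sensing domain $E(x)$ itself has only Lipschitz dependence on $x$, as emphasized in the footnote after \eqref{omega2}. This prevents us from differentiating the drift term freely, but fortunately the energy estimates of Step~3 require only the $L^\infty$ bound on $\calK,\calS$ from Step~2, and the contraction argument for local existence only needs the Lipschitz bound in $u,v$ supplied by Lemma~\ref{LEMKS1}. Once the uniform $L^\infty$ bound is in hand, a parabolic bootstrap in the $(u,v)$-Neumann heat equation with now-$L^\infty$ drift extends the solution past any finite time, excluding $T_{\max}<\infty$ and delivering the claimed global regularity and time-independent $L^\infty$ estimate.
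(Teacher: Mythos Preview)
Your proposal is correct and follows essentially the same route as the paper: local existence via a contraction in $C([0,T];W^{1,p})$ with homogeneous Neumann data (the paper uses the Duhamel representation with the semigroup smoothing estimate \eqref{NMNEMB} rather than maximal regularity for the fixed point, but this is cosmetic), non-negativity by testing with $u_-,v_-$, a uniform $L^1$ bound from the logistic structure, the resulting $L^\infty$ control on $\calK,\calS$, and then a Moser--Alikakos iteration for the uniform $L^\infty$ bound, with an $L^\infty$-to-$W^{1,p}$ bootstrap (the paper's Lemma~\ref{BUCLEM1}) to rule out finite-time blow-up. The only cosmetic discrepancy is that the paper treats the $L^1$ estimates for $u$ and $v$ separately rather than summing, and runs Moser--Alikakos directly from the differential inequality rather than first recording fixed-$p$ bounds.
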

\indent The paper is organized as follows.
We prove Theorem \ref{GETHM} in Section \ref{SEC2} and Theorem \ref{GETHM1} in Section \ref{SEC3}. Both sections start from proving preliminary results about the adhesion velocities
 $\calK[u,v]$, and $\calS[u,v]$.  The local well-posedness of \eqref{MODEL00}-\eqref{omega1} is proved in  Lemma \ref{LEM1}.
 The global well-posedness follows from the blow-up criteria (Lemma \ref{BUCLEM} and  Lemma \ref{LEMLINFUV}), where the solutions are found to be uniformly bounded in the $\| \cdot \|_{\iom}$ norm.  Section \ref{SEC3} is organized analogously; the local well-posedness of \eqref{MODEL11}-\eqref{omega2}
in Lemma \ref{LEM4} and the uniform boundedness in Lemma \ref{LEMLINFUV1}. Finally we provide a linear stability analysis in the Appendix for the steady state $(0,0)$, $(0, k)$ of the zero-zero case. 

Before closing this section we write down the related nonlocal $($local$)$ adhesion models arising  in cancer cell invasion \cite{BCE17, DL20, EPSZ} and explain simplifications and differences of our model compared to those studied in \cite{BCE17}.  
\begin{remark}\label{Rmk1}
A class of nonlocal(local) adhesion models proposed in \cite{BCE17}$($\cite{DL20}$)$ respectively 
can be written in an integrated manner  as
\begin{equation}\label{GENMODEL}
 \left\{
\begin{array}{ll}
\vspace{1.5mm}
\partial_t u_{1} = D_{1}\Delta u_{1} -\nabla \cdot (u_{1} \, F_{1}[u_{1},u_{2},f,c,b]     )-mu_{1} +G_{1}(u_{1}, u_{2},f,b), \\
\vspace{1.5mm}
\partial_t u_{2}   =D_{2}\Delta u_{2} -\nabla \cdot (u_{2} \,F_{2}[u_{1},u_{2},f,c,b]     )+mu_{1} +G_{2}(u_{1}, u_{2},f,b),\\
\vspace{1.5mm}
\partial_t f = -\alpha_{1}u_{1}f-\alpha_{2}u_{2}f \pm \alpha_{3}bf+\alpha_{4}f(\alpha_{5}-f),
\\
\vspace{1.5mm}
\partial_t c = \beta_{1}u_{1}+\beta_{2}u_{2}+ \beta_{3}bc- \beta_{4}c,
\\
\vspace{1.5mm}
\partial_t b = D_{3}\Delta b+\mu_{1}u_{1}+\mu_{2}u_{2}-\mu_{3}b.
\end{array}
\right. 
\end{equation}
Bitsouni  et al. \cite{BCE17} considered the system \eqref{GENMODEL} with nonlocal adhesion forces $F_{i}$, $i=1,2$. More precisely
\begin{equation}\label{remarkNA} F_i[u_1, u_2, f, c, b]:= \int_{\Om} K(|y-x|)g_i(u_1, u_2, f, c, b)(y) dy \end{equation}
for a kernel $K_i \in (W^{1, \infty}(\Om))^5$, where $g_i$ describes the nature of cell-cell and cell-matrix adhesive forces, chosen to be a linear function in $u_i$ and $f$ with bounded coefficients. The system describes the dynamics of two cancer cell populations coupled with ordinary differential equations describing the ECM (extracellular matrix) degradation, the production and decay of integrins $c$, and with an equation governing the evolution of the TGF-$\beta$ concentration. We denote 
by $u_1, u_2, f, c, b$ the density of healthy or early stage cancer cell population, the density of late stage cancer cell population, 
the ECM density, the level of integrins, and TGF-$\beta$ concentration. $\al_i, \be_i, \mu_i$ are given parameters. The first two equations also refer to homogeneous and heterogeneous cancer invasions. The movement of early stage cell is due to random diffusion and directed motility in response to adhesive forces, and the movement of late stage cell due to directed motility only $(D_2=0)$, exhibiting a heterogeneous type invasion. Note that $u_1$ mutates into $u_2$ at a constant rate $m$ in the model. On the boundary of the domain $\Om$, they assumed 
\begin{align}\label{bce17bd}
\nabla u_i\cdot \nu = F_i\cdot \nu = \nabla b \cdot \nu = 0, \quad i=1, 2.
\end{align}

 On the other hand,
Dai and Liu \cite{DL20} considered   the system \eqref{GENMODEL} with local adhesion forces $F_{i}=\chi_{i}\nabla f$, $i=1,2$  and $c\equiv0$,  where they modeled two species cancer invasion of surrounding healthy tissue. More precisely, their model involves the epithelial-mesenchymal transition $($EMT$)$ from epithelial phenotype cancer cells $(u_1)$  to mesenchimal phenotype cells $(u_2)$ at a constant rate $m$,  the migration of two cancer cell populations due to random diffusion $D_{i}>0$, $i=1,2$,  and haptotaxis $F_{i}$, $i=1,2$,  the proliferation of cancer cells $G_{i}$, $i=1,2$,   and their competition for space with ECM $(f)$, the production and decay of matrix metalloproteinases $(b)$, the degradation of ECM by matrix metalloproteinases upon contact, $-\alpha_{3}bf$, etc. On the boundary of the domain, they assumed $D_{i}\nabla u_{i}\cdot\nu-\chi_{i}u_{i}\nabla f\cdot\nu=\nabla b\cdot \nu=0$, $i=1,2$.

As is mentioned earlier,  Eckardt  et al. proposed the nonlocal models \footnote
{Eckardt et al. considered averaging nonlocal operators $\mathcal A_r$ and $\mathring{\nabla_r}$. Following notations in this paper, we have
\[\mathcal A_Ru(x) = \frac{1}{R}\frac{1}{|B_R(0)|} \int_{E(x)} u(x+y) \frac{y}{|y|} w(|y|) dy= \frac{1}{M_{11}R}\frac{1}{|B_R(0)|}\mathcal K [u,0] \]
 by choosing $\om(x)=\frac{y}{|y|} w(|y|)$ in \eqref{MODEL01}.
}
describing
   cell-cell and cell-tissue adhesions or nonlocal taxis (for a single cell phenotype) in \cite{EPSZ}. They proved global existence of  weak-strong solution and investigated solution properties for such complex systems, including non-linear motility coefficients. Moreover they verified a rigorous limit procedure linking nonlocal adhesion models and their local counterparts.
   \end{remark}
\begin{remark}\label{Rmk2}
 A global in time existence of a weak solution of the parabolic-hyperbolic system \eqref{GENMODEL} was shown in
 \cite{BCE17} under the zero-zero flux condition \eqref{bce17bd}. In this paper we simplify the system into \eqref{MODEL00} or 
\eqref{MODEL11} by ignoring the cell-matrix adhesion force as well as  effects of integrins and TGF-$\beta$ protein. Instead, we focus on mathematical analysis on nonlocal cell-cell adhesion under either the nonlocal Robin boundary condition, 
which was not addressed in \cite{BCE17} or \cite{HB19}, or the zero-zero flux condition \eqref{bce17bd}.  
The same analysis works as well without difficulty if we include the ECM equation and consider the nonlocal adhesion forces in the form of 
\eqref{remarkNA} with $c=b=0$. In the zero-zero conditions,  $F_i\cdot \nu=0$ 
is not usual since it is imposed on the nonlocal operator $F_i$, not on the solutions. In Section $3$ we work with a concrete
sensing domain to find that such boundary condition indeed makes sense. However, the choice of $E(x)$ seems made arbitrary and we have not yet found a more meaningful choice connected to the biological context considered here. 

Apart from the above simplifications, we assume $D_2$ strictly positive in the $u_2$-equation. Compared with \cite{BCE17}, we are interested in the boundary value problems with nonlocal Robin boundary condition or zero-zero flux condition and strong solutions for the boundary value problems. It is not clear to us whether the similar methods of our research can be applied for the case $D_2=0$.  When $D_2=0$,  Lagrangian coordinates $($\cite{DA, FU}$)$ seem to be more useful to construct a bounded solution along the characteristic curve if $u_1, u_2$ have same directed motility or velocity, $F_1, F_2$, which corresponds to
$M_{11}=M_{22}$, $M_{12}=M_{21}$ in \eqref{MODEL01}.
\end{remark}

\section{Nonlocal Robin boundary case}\label{SEC2}
\begin{subsection}{Preliminary}\label{SEC21}
We first consider a lemma on the extension $\mathcal{N}$ of the normal vector field $\nu$ into the domain $\Omega$. The   extension $\mathcal{N}$  is used in   Section~\ref{SEC22} to interpret  our nonlocal Robin boundary value problems as inhomogeneous Neumann  boundary value problems.
The lemma holds for a bounded domain with $C^k$-smooth boundary for any integer $k\ge 2$ without difficulty. Before stating the lemma, we remind the definition of a $C^k$-boundary in \cite[Appendix C.1]{EVANS}.
\begin{defn}\label{defbound}
Let $U \subset \R^n$ be an open bounded domain, and    $k\in \{1, 2, \dots\}$.  We call $\pa U$ is $C^k$ if
for each point $x^0 \in \pa U$ there exist $r >0$ and a $C^k$ function $\xi: \bbr^{n-1} \to \bbr$ 
such that upon an orthogonal change of coordinates we have
\[
U\cap B_r(x^0)= \{ x\in B_r(x^0) \, |\,  x_n > \xi(x_1, \dots, x_{n-1})\}.
\]
\end{defn}
\begin{lemma}\label{LEMNU}
Let $\Omega \subset\R^{n}$, $n\ge2$, be an open bounded domain with $C^2$ boundary $\pa \Omega$. Then, there exists at least one   vector field $ \mathcal{N}\in C^{1}(\overline{\Omega})$,  a continuous extension of the unit outward normal vector field $\nu$, such that
$\mathcal{N}=\nu$ on $\partial\Omega$. 
\end{lemma}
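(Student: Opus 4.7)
The plan is to construct $\mathcal{N}$ by extending $\nu$ off $\partial\Omega$ via the gradient of the signed distance function to $\partial\Omega$, and then damping this extension to zero before reaching the deep interior of $\Omega$ by a smooth one-variable cut-off.

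First I would invoke the following classical consequence of the $C^2$-regularity of $\partial\Omega$ (cf.\ Gilbarg--Trudinger, Appendix 14.6): there exists $\delta_0 > 0$ such that the signed distance function
\[
d(x) := -\operatorname{dist}(x,\partial\Omega) \text{ for } x\in \overline{\Omega}, \qquad d(x) := +\operatorname{dist}(x,\partial\Omega) \text{ for } x\notin \Omega,
\]
belongs to $C^2(U)$ on the tubular neighborhood $U := \{x\in \R^n : |d(x)|<\delta_0\}$, and $\nabla d(x) = \nu(x)$ for every $x\in\partial\Omega$. This is standardly proved via the normal exponential map $(y,t)\mapsto y + t\,\nu(y)$ from $\partial\Omega\times(-\delta_0,\delta_0)$ onto $U$, which is a $C^1$ diffeomorphism when $\partial\Omega\in C^2$.

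Next I would fix $\delta\in(0,\delta_0)$ and pick a cut-off $\chi\in C^\infty(\R)$ with $\chi\equiv 1$ on $[-\delta/2,\infty)$ and $\chi\equiv 0$ on $(-\infty,-\delta]$. Define
\[
\mathcal{N}(x) :=
\begin{cases}
\chi(d(x))\,\nabla d(x), & x \in \overline{\Omega}\cap U,\\
0, & x \in \overline{\Omega}\setminus U.
\end{cases}
\]
On $\overline{\Omega}\cap U$ this is a $C^1$ vector field because $d\in C^2(U)$, so $\nabla d\in C^1$ and $\chi\circ d\in C^2$. On the overlap where $d(x)<-\delta/2$ inside $U$, the factor $\chi(d(x))$ and its first derivatives all vanish, so the two defining formulas match in a $C^1$ fashion across the interface into $\overline{\Omega}\setminus U$. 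Hence $\mathcal{N}\in C^1(\overline{\Omega})$. Finally, on $\partial\Omega$ we have $d=0$ and $\chi(0)=1$, so $\mathcal{N}|_{\partial\Omega} = \nabla d|_{\partial\Omega} = \nu$, as required.

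The only non-routine ingredient is the $C^2$-regularity of the signed distance in a tubular neighborhood, which is the technical heart of the argument but is standard. A fully self-contained alternative would be a partition-of-unity construction using Definition~\ref{defbound}: cover $\partial\Omega$ by finitely many balls in which it is a $C^2$ graph $x_n=\xi(x')$, in each ball write the outward unit normal explicitly as a $C^1$ vector field of $x'$ alone (independent of $x_n$) and restrict it to the ball, and glue the resulting local extensions against a subordinate $C^\infty$ partition of unity to which an interior patch carrying the value zero has been adjoined. Either route produces the desired $\mathcal{N}$.
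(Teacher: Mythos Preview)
Your proof is correct and rests on the same tubular-neighborhood idea as the paper's. The paper parametrizes the collar explicitly via $E(x,t)=x-t\nu(x)$, defines a preliminary extension $N(y)=\frac{\rho-t}{\rho}\,\nu(x)$ with a merely linear damping in $t$, and then invokes an unspecified final smoothing in $T_\rho\setminus T_{\rho/2}$ to obtain $C^1$-regularity; your route through the signed distance function and a smooth cutoff $\chi$ is the same construction in different coordinates (since $\nabla d(y)=\nu(x)$ for $y=x-t\nu(x)$ in the collar) but delivers $\mathcal{N}\in C^1(\overline{\Omega})$ directly, with the Gilbarg--Trudinger citation making the technical input explicit. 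One small slip: you say $\chi(d(x))$ and its derivatives vanish ``where $d(x)<-\delta/2$'', whereas with your choice of $\chi$ this holds only for $d(x)\le-\delta$; the gluing is unaffected because the interface with $\overline{\Omega}\setminus U$ lies at $d=-\delta_0<-\delta$.
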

\begin{proof} 
 For $x^0 \in  \pa \Om$ there is $r, \xi$, and a relabeled tuple $(x_1, \dots, x_n)$ as in the definition such that $\pa U \cap B_r(x^0) = \{  x\in B_r(x^0) \,|\,  \xi(x_1, \dots, x_{n-1}) -x_n = 0\}$. 
The outward normal vector field $\nu(x)$ is well defined by
 \[ \nu(x) = \frac{ (\na \xi(\bar x), -1)}{ \sqrt{ 1 + |\na \xi(\bar x)|^2}}\]
 where $x= (\bar x, x_n)$.
\\
\indent Let us consider the function $E: \pa \Om \times \bbr \to \bbr^n$ given by 
\[ E(x, t) = x - t\nu(x). \]
Applying the tubular neighborhood theorem in Cahpter $10$ of \cite{Lee} there exist $\rho>0$  such that  $E$ is the $C^1$-diffeomorphism \footnote {We follow a definition of $C^1$-functions on a submanifold $\pa \Om \times \bbr$ embedded in $\bbr^n \times \bbr$.  The open set $E(V)$is called a tubular neighborhood of $\pa \Om$,
} on 
$ V: =\{(x, t) \in \pa \Om \times \bbr \,|\, |t| < \rho\}$ and 
 $T_{\rho}:=\{ E(x, t) | x\in \pa \Om, 0 < t <\rho \}$ is in $\Om$. 
We  find a smooth extension of $\nu$ on $\overline \Om$ as follows.  For $y\in T_{\rho}$
there is a unique $(x, t) \in \pa \Om \times \bbr$ such that $ y = x -t \nu(x)$, since $E$ is one to one. Note that the mapping $y \to (x, t) $ is $E^{-1}$, hence $C^1$.  We define a continuous extension of $\nu$ by 
\[ { N} (y) = 
\begin{cases} \frac{\rho -t}{\rho} \nu(x)\quad \mbox{ for } y\in T_{\rho} \\
 0 \quad \quad \quad \quad \mbox{ for } y \in \Om \setminus T_{\rho}.
 \end{cases} \]
  $N$ being $C^1$ on $T_{\rho}$ follows from the construction;  Define $\pi_1 : \pa \Om \times \bbr \to  \pa \Om $ to be the projection  onto the first slot, and $\pi_2$ onto the second slot.  Then we can write 
\[ N(y) =  \frac{\rho - \pi_2 \circ E^{-1}(y)}{\rho } \nu ( \pi_1\circ E^{-1}(y))\]
with $\nu(x) = \frac{ (\na \xi(\bar x), -1)}{ \sqrt{ 1 + |\na \xi(\bar x)|^2}}$ upon an orthogonal change of coordinates. 
Finally  smoothing out $N$ in $T_{\rho} \setminus T_{\frac{\rho}{2}}$ we obtain a smooth extension $\calN$ of $\nu$ on $\overline \Om$. 
\end{proof}

Before closing the section we prepare a $W^{1,\infty}(\Omega)$ estimate for the non-local  terms $\mathcal{K}$  and $\mathcal{S}$. 
\begin{lemma}\label{LEMKS}
Let $\Omega \subset\R^{n}$, $n\ge2$, be an open bounded domain with $C^2$ boundary $\pa \Omega$. Suppose that $f$, $g\in L^{1}(\Omega)$.
Assume further that   $\mathcal{K}[f,g]$, $\mathcal{S}[f,g]$, $E$, and $\omega$ are   given by  \eqref{MODEL01}, and  
 \eqref{sensing1}--\eqref{omega1}.
Then, there exists a  constant $C>0$ satisfying
\[\norm{\mathcal{K}[f,g]}_{W^{1,\infty}(\Omega)}+\norm{\mathcal{S}[f,g]}_{W^{1,\infty}(\Omega)}\le C(\norm{f}_{L^{1}(\Omega)}+ \norm{g}_{L^{1}(\Omega)}).
\]
\end{lemma}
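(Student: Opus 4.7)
The $L^\infty$ half is immediate. Since every component $\om_i$ is bounded and the substitution $z=x+y$ sends $E(x)=\{y\in\R^n:|y|<R,\ x+y\in\Om\}$ bijectively onto $B_R(x)\cap\Om\subset\Om$,
\begin{equation*}
|\calK[f,g](x)|+|\calS[f,g](x)|\le C\|\om\|_{L^\infty}\int_{B_R(x)\cap\Om}\bigl(|f(z)|+|g(z)|\bigr)\,dz\le C\bigl(\|f\|_{L^1(\Om)}+\|g\|_{L^1(\Om)}\bigr)
\end{equation*}
uniformly in $x\in\Om$, with $C$ depending on $\|\om\|_{L^\infty}$ and the $M_{ij}$'s.

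For the derivative, the strategy is to transfer the $x$-derivative onto the smooth factor $\om$, so that $L^1$ regularity of $f,g$ suffices. Extending $f,g$ to $\R^n$ by zero outside $\Om$ and writing $\tilde h := M_{11}\tilde f + M_{12}\tilde g \in L^1(\R^n)$, I would recast
\begin{equation*}
\calK[f,g](x) = \int_{B_R(0)} \tilde h(x+y)\,\om(y)\,dy
\end{equation*}
as an integral over the $x$-independent domain $B_R(0)$. Using the distributional identity $\pa_{x_i}\tilde h(x+y) = \pa_{y_i}\tilde h(x+y)$ and integrating by parts in $y$, one is led to
\begin{equation*}
\pa_{x_i}\calK[f,g](x) = -\int_{B_R(0)} \tilde h(x+y)\,\pa_{y_i}\om(y)\,dy,
\end{equation*}
which is bounded in $L^\infty(\Om)$ by $\|\na\om\|_{L^\infty}\bigl(\|f\|_{L^1(\Om)}+\|g\|_{L^1(\Om)}\bigr)$. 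The argument for $\calS[f,g]$ is identical.

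The step I expect to require the most care is the justification of the integration by parts: formally a surface term on $\pa B_R(0)$ appears, and this piece involves a trace of $\tilde h$, which is not defined for a generic $L^1$ function. My plan is to first establish the derivative formula for $f,g\in C^\infty(\overline{\Om})$, where the manipulation is classical and the surface contribution is controlled by the smoothness of $\om$ together with the fact that $\tilde h\equiv 0$ on $\R^n\setminus\Om$ and that $\pa\Om\in C^2$, and then pass to the $L^1$-limit, exploiting that the resulting bound depends only on $\|\om\|_{C^1}$, $R$, the $M_{ij}$'s, and $\|f\|_{L^1(\Om)}+\|g\|_{L^1(\Om)}$.
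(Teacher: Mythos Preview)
Your integration-by-parts step does not go through. After extending by zero and writing $\calK[f,g](x)=\int_{B_R(0)}\tilde h(x+y)\,\om(y)\,dy$, differentiating in $x_i$ and transferring the derivative to $y_i$ produces, in addition to $-\int_{B_R(0)}\tilde h(x+y)\,\pa_{y_i}\om(y)\,dy$, the surface contribution
\[
\int_{\pa B_R(0)}\tilde h(x+y)\,\om(y)\,\nu_i(y)\,dS(y).
\]
Under assumption \eqref{omega1} the weight $\om$ is only bounded and smooth; nothing forces $\om|_{\pa B_R(0)}=0$, so this term is genuinely present. It is not killed by $\tilde h\equiv 0$ on $\R^n\setminus\Om$ or by the regularity of $\pa\Om$: the sphere $x+\pa B_R(0)$ generically intersects $\Om$ in a set of positive surface measure, and the resulting integral involves values of $\tilde h$ on that sphere, which are neither defined for $L^1$ data nor controlled by $\|\tilde h\|_{L^1}$ after approximation. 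A quick sanity check exposes the problem: take $\om\equiv e_1$, a constant vector (allowed by \eqref{omega1}). Your proposed formula would give $\na\calK\equiv 0$, whereas in fact $\calK[f,g](x)=e_1\int_{B_R(x)\cap\Om}(M_{11}f+M_{12}g)\,dz$ plainly depends on $x$.

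The paper takes a different route and never integrates by parts. It rewrites $\calI[f](x)=\int_{V_x}f(z)\,\om(z-x)\,dz$ with $V_x=\Om\cap B_R(x)$ and estimates the increment $\calI[f](x+h)-\calI[f](x)$ directly, splitting it into (i) a term where only $\om$ is translated, handled by $\|\na\om\|_{L^\infty}\|f\|_{L^1}|h|$, and (ii) a term living on the symmetric difference $V_{x+h}\triangle V_x$, whose Lebesgue measure is $O(|h|)$. If you want to salvage your approach you would need either a stronger hypothesis such as $\om|_{\pa B_R(0)}=0$, or to keep the surface term and bound it separately---which in effect brings you back to the geometric symmetric-difference estimate the paper relies on.
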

\begin{proof}
It suffices to prove the Lipschitz continuity  of 
\[
\mathcal{I}[f](x)=\displaystyle\int_{E(x)} f(x+y)      \omega(y)dy,
\]
where $E(x)$ and $\omega$ are introduced in \eqref{sensing1}--\eqref{omega1}. By change of variable, $\mathcal{I}[f](x)$ can be written  as 
\[
\mathcal{I}[f](x)=\int_{V_{x}}f(z)      \omega(z-x)dz,\qquad V_{x}=\bket{ z\in\Omega \,|\,|z-x|<R   }.
\]
Choose any two points $x, y \in \Om$ and let $h= y-x$. We have
\[
\mathcal{I}[f](x+h)-\mathcal{I}[f](x)=\int_{V_{x+h}} f(z)\omega(z-x-h) dz-\int_{V_{x}} f(z)\omega(z-x) dz
\]
\[
=\int_{V_{x+h}} f(z)\bkt{\omega(z-x-h)-\omega(z-x)} dz+\int_{V_{x+h}} f(z)\omega(z-x) dz-\int_{V_{x}} f(z)\omega(z-x) dz.
\]
 We estimate
\[
\biggr{|}\int_{V_{x+h}} f(z)\bkt{\omega(z-x-h)-\omega(z-x)} dz\biggr{|}\le  C|h|\norm{f}_{L^{1}(\Omega)}\norm{\omega}_{W^{1,\infty}(\Omega)}.
\]
For the the other terms, we note that
 $ (V_{x+h}\cup V_{x}) \setminus (V_{x+h}\cap V_{x})$ is a subset of $B_R(x+h) \cup B_R(x) \setminus B_R(x+h) \cap B_R(x)$ in $\bbr^n$ and the volume of the latter is bounded by $C |h|$  with a uniform constant $C$ if  $y= x+h$ are in $B_R(x)$. We have
\[
\biggr{|}\int_{V_{x+h}} f(z)\omega(z-x) dz-\int_{V_{x}} f(z)\omega(z-x) dz\biggr{|}\le \norm{f}_{L^{1}(\Omega)}\norm{\omega}_{L^{\infty}(\Om) }\int_{(V_{x+h}\cup V_{x}) \setminus (V_{x+h}\cap V_{x})}1dz
\]
\[
\le C|h|\norm{f}_{L^{1}(\Omega)}\norm{\omega}_{L^{\infty} (\Omega)},
\]
hence
it  follows that
\[
\bigr{|}\mathcal{I}[f](x+h)-\mathcal{I}[f](x)\bigr{|}\le C|h|\norm{f}_{L^{1}(\Omega)}\norm{\omega}_{W^{1,\infty}(\Omega)}.
\]
This completes the proof.
\end{proof}

\end{subsection}
\begin{subsection}{Local well-posedness and blow-up criteria}\label{SEC22}
To prove a local well-posedness of \eqref{MODEL00}--\eqref{omega1}, we  shall employ {\it a generalized variation-of-constants formula} \eqref{INTF} for Neumann type parabolic boundary value problem  established by Amann (\cite{A86,AM93} {\it e.g.}).
Below, we introduce  the interpolation scale of spaces $E_{d}$ and the set of linear operators $A_{\delta}$ needed for writing down the formula. We work with $E_{d} $ and $A_{\delta}$  for $-1< d, \delta <0$, while they can be defined for $d, \delta>-1$. \\
\indent 
We denote  the boundary trace operator by $\gamma$. Note that
\begin{equation}\label{TRNOT}
\gamma\in\mathcal{L}( W^{1,p}(\Omega), W^{1-\frac{1}{p},p}(\partial\Omega)) \mbox{ for } 1< p<\infty.
\end{equation}
We also denote  the boundary trace of normal derivative by $B$;
\[ 
Bf:=  \ga \nabla f\cdot \nu  \, \mbox{ for }\,  f\in  W^{s, p}(\Om),\,1+\frac{1}{p}<s\le 2,\, 1<p<\infty.
\]
Consider the sectorial operator $A:= {(I-\Delta)}|_{D(A)}$     with its domain 
\[
D(A):=\bket{f\in W^{2,p}(\Omega) \, | \, Bf=0\,\mbox{ on }\,  \partial\Omega   }\, \mbox{ for }\, 1<p<\infty.
\]
Note that 
$ \inf Re\, \Sigma(A)>0$, where $\Sigma(A)$ is  the spectrum of $A$.  
In \cite[Section 4]{AM93} the pair $(A, B)$ is found to satisfy the condition of {\it normally elliptic problem}, which enables one to construct an interpolation scale of spaces. 
Let
\[
W^{s,p}_{B}:=\left\{
\begin{array}{lll}
\bket{f\in W^{s,p}(\Omega) \, | \, B f=0     }, \quad &\,\,\,\,1+\frac{1}{p}<s\le 2, \\
   W^{s,p}(\Omega),\quad &-1+\frac{1}{p}<s<1+\frac{1}{p}, \\
(W^{-s,p'}(\Omega))', \quad  &-2+\frac{1}{p}<s\le -1+\frac{1}{p},
\end{array}
\right. 
\]
and take $E_{0}=L^{p}(\Omega)=W^{0,p}_{B}$,  $E_{1}=W^{2,p}_{B}$.   Following  \cite[Section 6]{AM93},
  we construct an interpolation scale of spaces  
\[
E_{\theta}:=(E_{0},E_{1})_{\theta,p}=W^{2\theta,p}_{B}\]   for $2\theta\in(0,2)\setminus\{1,1+\frac{1}{p}\}$, where $(\cdot,\cdot)_{\theta,p}$ denotes the real interpolation functor.
 Introducing a completion of the normed space $(E_{0},\|A^{-1}\cdot\|_{E_{0}})$, which is denoted by $E_{-1}$, 
 we can inductively extend the definition of $E_{k+\theta}$ and $A_{k+\theta}$ for $-1< k+\theta<\infty$, $k=-1,0,1,\cdots$  (see \cite[(6.4)]{AM93}). Then, we have a family of operators 
\[
 A_{k+\theta}\in\mathcal{L}(E_{k+1+\theta},E_{k+\theta})
\]
  such that $-A_{k+\theta}$ is the infinitesimal generator of an analytic semigroup 
  \[
  \bket{e^{-tA_{k+\theta}} \,|\, t\ge0}\, \mbox{ on }\, E_{k+\theta},
  \]
and $A_{k+\theta}$ is a $W^{2(k+\theta), p}$- realization  
\footnote{ $A_{\beta}$ is a $W^{s,p}$- realization of $A$ if
$ A= A_{\beta}$ in $D(A)$ and  the range of  $A_{\beta}$ is in $W^{s,p}$. }of $A$ for {$ -1<k+\theta<\infty$, $k=-1,0,1,\cdots$, $0<\theta<1$, and $1<p<\infty$.} Let us  specify 
\begin{equation}\label{aal}
A_{\alpha-1}=W^{2(\alpha-1),p}_{B}\mbox{- realization of }\, A  \,\mbox{ for }\, 2\alpha\in \Bigr{(}1, 1+\frac{1}{p}\Bigr{)} \,\mbox{ with }\, n<p <\infty.
\end{equation}
The semigroup $e^{-t A_{\alpha-1}}$ satisfies the smoothing estimate (\cite[Lemma 3.1]{CM-R13}):\\
If $1<p<\infty$, $1<\beta<2\alpha<1+\frac{1}{p}$, $f\in W^{2\alpha-2,p}_{B}$, then there exist positive constants $\sigma=\sigma(\beta)<1$, $\kappa<1$ and $C(\alpha,\beta,p)$ such that
\begin{equation}\label{NONEST}
\norm{   e^{-tA_{\alpha-1}}f }_{W^{\beta,p}(\Omega)}\le Ct^{-\sigma}e^{-\kappa t}\norm{  f }_{W^{2\alpha-2,p}_{B}},\qquad t>0.
\end{equation}
 We will use \eqref{NONEST} in the proof of Lemma~\ref{LEM1} to  control the nonlinear terms.
Lastly, we define $B^c$ by  the continuous extension of $(B|_{Ker  (I-\Delta)} )^{-1}$ to $W^{2\alpha-1-\frac{1}{p}}(\partial\Omega)$.  Note that \begin{equation}\label{bc}
B^c\in\mathcal{L}(W^{2\alpha-1-\frac{1}{p}, \frac{1}{p}}(\partial\Omega), W^{2\al, p}(\Om))
\end{equation} for $\al, p$ of same range in \eqref{aal}.
\\
\indent
Let $\mathcal{N}\in C^{1}(\overline{\Omega})$ be a fixed  vector field satisfying $\mathcal{N}=\nu$ on $\partial\Omega$ constructed in Lemma~\ref{LEMNU}. Now 
 we consider the inhomogeneous Neumann  boundary value problems for \eqref{MODEL00}--\eqref{omega1}: 
\begin{equation}\label{NPBV0}
\begin{aligned}
&\partial_{t}u+(I-\Delta)u=g_{1},&  &\partial_{t}v+(I-\Delta)v=g_{2},   \quad\qquad&x\in\Omega, \,\,t\le T,\,\,\,&\\
&\partial_{\nu}u= h_1, &\qquad\quad  &\partial_{\nu} v=h_2,  &x\in\partial\Omega, \,\,t\le T,&\\
&u(x,0) =u_{0}(x),&\qquad  &v(x,0) =v_{0}(x),  &x\in\partial\Omega,\quad\qquad\,&
\end{aligned}
\end{equation}
and its   {\it generalized variation-of-constants formulas:} 
\begin{equation}\label{INTF}
  \begin{array}{ll}
u=e^{-tA_{\alpha-1}}u_{0}+\int_{0}^{t}e^{-(t-\tau)A_{\alpha-1}}(g_{1}(\tau)+ A_{\alpha-1} B^{c}  \gamma h_{1}(\tau))d\tau,\vspace{1mm}
\\
v=e^{-tA_{\alpha-1}}v_{0}+\int_{0}^{t}e^{-(t-\tau)A_{\alpha-1}}(g_{2}(\tau)+ A_{\alpha-1} B^{c} \gamma h_{2}(\tau))d\tau,
  \end{array}
\end{equation}
where
 \begin{equation}\label{INTF1}
  \begin{array}{ll}
g_{1}:=-\nabla \cdot (u \,\mathcal{K}[u,v]     )+(1-m)u +\displaystyle\frac{\lambda}{k} u(k-(u+v)),
\\
g_{2}:=-\nabla \cdot (v \,\mathcal{S}[u,v]     )+mu+v +\displaystyle\frac{\mu}{k} v(k-(u+v)),
\\
h_{1}:=u\mathcal{K}[u,v]\cdot\mathcal{\mathcal{N}},\qquad h_{2}:=v\mathcal{S}[u,v]\cdot\mathcal{\mathcal{N}}.
  \end{array}
\end{equation}
The formal argument to write  \eqref{NPBV0} into  \eqref{INTF} is presented in \cite[(11.16)--(11.20)]{AM93}.  
{\begin{defn}
Let $\Om, \calK, \calS$ and $\om$ be given as  for Theorem \ref{GETHM}. Assume $u_0$ and $v_0$ are functions belonging to $W^{2,p}(\Om), p>n$. We call $(u, v)$ in $(L^{\infty}(0, T; W^{1,p}(\Om)))^2$ satisfying the integral equation \eqref{INTF}, \eqref{INTF1} a \it{generalized solution} of 
\eqref{NPBV0} for $T>0$.
\end{defn}}
In what follows  we show that  \eqref{NPBV0}   has a unique local-in-time generalized 
 solution, which coincides with a strong solution satisfying maximal regularity estimates.

\begin{lemma}\label{LEM1}
Let $\Om, \calK, \calS$ and $\om$ be given as for Theorem \ref{GETHM}.
Assume that  $u_{0}$ and $v_{0}$  are  non-negative functions belonging to $W^{2,p}(\Omega)$, $p>n$, and satisfying
 the compatibility condition
\[
\partial_{\nu}u_{0}=u_0\mathcal{K}[u_0,v_0]\cdot \nu,\qquad \partial_{\nu}v_{0}=v_0\mathcal{S}[u_0,v_0]\cdot \nu,\qquad x\in\partial{\Omega}.
\]
 Then, there is a maximal time, $T_{\rm max}\le\infty$, such that a unique  non-negative strong  solution  $(u,v)$ of   \eqref{NPBV0}  exists and satisfies
\[
 u,v\in C([0,t]; W^{1,p}(\Omega))\cap W^{1,p}(0,t;L^{p}(\Omega))\cap L^{p}(0,t;W^{2,p}(\Omega)),\qquad t<T_{\rm max}. 
\]
Moreover, if $T_{\rm max}<\infty$, then
\begin{equation}\label{BUC}
\lim_{t\rightarrow T_{\rm max}}(\norm{u(\cdot,t)}_{W^{1,p}(\Omega)}+\norm{v(\cdot,t)}_{W^{1,p}(\Omega)})=\infty.
\end{equation}
\end{lemma}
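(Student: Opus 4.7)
The plan is the two-step strategy indicated before the lemma: first construct a generalized (mild) solution of \eqref{NPBV0} via a fixed-point argument built on the formula \eqref{INTF}--\eqref{INTF1}, then upgrade it to a strong solution with the asserted maximal regularity by invoking Denk--Hieber--Pr\"uss. I would fix $\alpha,\beta,p$ with $1<\beta<2\alpha<1+\tfrac1p$ and $n<p<\infty$, and work in the complete metric space
\begin{equation*}
X_{T,M}:=\left\{(u,v)\in C([0,T];W^{\beta,p}(\Omega))^2 : u(0)=u_0,\ v(0)=v_0,\ \sup_{0\le t\le T}(\|u(t)\|_{W^{\beta,p}}+\|v(t)\|_{W^{\beta,p}})\le M\right\}
\end{equation*}
with $M$ slightly larger than $\|u_0\|_{W^{\beta,p}}+\|v_0\|_{W^{\beta,p}}$. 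Denoting by $\Phi(u,v)$ the right-hand side of \eqref{INTF}--\eqref{INTF1}, the central task is to show $\Phi$ maps $X_{T,M}$ into itself and is a contraction for $T$ small; the fixed point is then the desired mild solution and uniqueness is automatic.

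Applying \eqref{NONEST} to the Duhamel terms reduces the contraction estimates to bounds on $\|g_i\|_{W^{2\alpha-2,p}_B}$ and on $\|\gamma h_i\|_{W^{2\alpha-1-1/p,p}(\partial\Omega)}$ via \eqref{bc}. Lemma~\ref{LEMKS} is the decisive input: it yields $\|\calK[u,v]\|_{W^{1,\infty}}+\|\calS[u,v]\|_{W^{1,\infty}}\lesssim \|u\|_{L^1}+\|v\|_{L^1}$, so the adhesion velocities act as smooth multipliers, $u\calK[u,v]\in W^{\beta,p}$, and writing the divergence in $g_i$ in divergence form bounds it in $W^{2\alpha-2,p}_B\simeq(W^{2-2\alpha,p'}(\Omega))'$ by duality; the trace $\gamma(u\calK[u,v]\cdot\calN)$ lies in $W^{\beta-1/p,p}(\partial\Omega)\hookrightarrow W^{2\alpha-1-1/p,p}(\partial\Omega)$ since $\beta\ge 2\alpha-1$, and the polynomial logistic terms are harmless because $n<p$ makes $W^{\beta,p}$ an algebra. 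The singular factor $\tau^{-\sigma}$ in \eqref{NONEST} is integrable and supplies a $T^{1-\sigma}$ power that closes the contraction for small $T$.

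With the mild solution $(u,v)\in C([0,T];W^{\beta,p})^2$ in hand, I would appeal to the maximal $L^p$-regularity for the inhomogeneous Neumann problem for $\partial_t-\Delta$ from \cite{DHP07}. Lemma~\ref{LEMKS} together with the already-obtained fractional regularity places $g_i\in L^p(0,T;L^p(\Omega))$ and $h_i\in L^p(0,T;W^{1-1/p,p}(\partial\Omega))$, while the compatibility at $t=0$ is exactly the hypothesis on $(u_0,v_0)$; consequently $u,v\in W^{1,p}(0,T;L^p)\cap L^p(0,T;W^{2,p})$, and the standard trace/interpolation embedding yields $C([0,T];W^{1,p})$. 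Non-negativity follows by regarding each equation as a linear advection-reaction-diffusion problem with bounded drift $\calK[u,v]$ (respectively $\calS[u,v]$) and bounded reaction coefficient, then applying the parabolic weak maximum principle; in the $v$-equation the cross-coupling $+mu$ preserves positivity once $u\ge 0$ has been established.

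Finally, the blow-up criterion \eqref{BUC} will follow from a standard continuation argument: if $T_{\max}<\infty$ but $\limsup_{t\to T_{\max}}(\|u(t)\|_{W^{1,p}}+\|v(t)\|_{W^{1,p}})<\infty$, then restarting the contraction with data $(u(t_0),v(t_0))$ for $t_0$ close to $T_{\max}$ yields an existence time depending only on that bound, extending the solution past $T_{\max}$ and contradicting maximality. The hard part of the whole scheme is the first step: because the nonlocal Robin boundary condition couples $h_i$ to $(u,v)$ nonlinearly and nonlocally, one cannot directly iterate on strong solutions, and it is precisely the generalized variation-of-constants formula \eqref{INTF} combined with the smooth extension $\calN$ of the outward normal from Lemma~\ref{LEMNU} that makes the fixed-point problem well-posed.
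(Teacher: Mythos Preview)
Your overall strategy matches the paper's: fixed-point for a mild solution via \eqref{INTF}--\eqref{INTF1}, then upgrade to strong via maximal regularity, then non-negativity, then continuation. The estimates you indicate for $g_i$ and for $\gamma h_i$ via Lemma~\ref{LEMKS} are essentially the ones the paper uses.

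There is, however, a genuine gap in your Step~2. To invoke \cite[Theorem~2.1]{DHP07} for the inhomogeneous Neumann problem it is \emph{not} enough to have $h_i\in L^{p}(0,T;W^{1-1/p,p}(\partial\Omega))$; the boundary datum must lie in the anisotropic trace space, which carries an additional fractional time regularity of order $\tfrac12-\tfrac1{2p}$ (roughly $h_i\in W^{\frac12-\frac1{2p},p}(0,T;L^{p}(\partial\Omega))\cap L^{p}(0,T;W^{1-1/p,p}(\partial\Omega))$). Your mild-solution bound $u,v\in C([0,T];W^{\beta,p})$ gives only $L^\infty_t$ control and does not by itself produce this time regularity. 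In the paper this is exactly the ``hard'' part of Step~2: one first shows $u,v\in C([0,T];W^{\beta,p}_B)$ and then, via Amann's results \cite[(3.5)]{A86}, deduces $u,v\in C^{\frac12-\frac1{2p}}([0,T];W^{\beta-1+1/p,p}_B)$, hence $h_1,h_2\in W^{\frac12-\frac1{2p},p}(0,T;W^{\beta-1,p}(\partial\Omega))$, which together with $h_i\in L^\infty(0,T;W^{1-1/p,p}(\partial\Omega))$ places the boundary data in the right space for DHP. You should add this intermediate time-regularity step.

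A secondary point: because you run the contraction in $C([0,T];W^{\beta,p})$ with $\beta>1$, your local existence time depends on $\|(u_0,v_0)\|_{W^{\beta,p}}$, so the continuation argument as written yields a blow-up criterion in $W^{\beta,p}$, not in $W^{1,p}$ as stated in \eqref{BUC}. The paper avoids this by doing the fixed point directly in $C([0,T];W^{1,p})$ (using $W^{\beta,p}\hookrightarrow W^{1,p}$ only to estimate the output of the semigroup), so that the existence time depends solely on $\|(u_0,v_0)\|_{W^{1,p}}$. Either switch your fixed-point space to $W^{1,p}$, or add a short bootstrap showing a $W^{1,p}$ bound near $T_{\max}$ implies a $W^{\beta,p}$ bound.
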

\begin{proof}
We divide the proof into three parts. We first obtain a unique  generalized solution $(u,v)$, and then show that it is indeed a strong solution. In the last step, the non-negativity of solution components is shown.\\
 {\bf Step 1} (Generalized solution)
Let $n<p <\infty$. With  positive constants $T<1$ and $R_0$  to be specified below, we introduce the Banach space $X_{T}:=C([0,T];W^{1,p}(\Omega))$ and its closed convex subset $S_{T}\subset X_{T}$,
\[
S_{T}:=\bigr{\{}\,f\in X_{T}\,|\, \norm{f}_{L^{\infty}(0,T; W^{1,p}(\Omega))}\le  R_0     \bigr{\}}.
\]
Let $u,v \in S_{T}$,   $2\alpha\in (1, 1+\frac{1}{p})$  and $0<t<T$. As in \eqref{INTF}--\eqref{INTF1}, we consider \begin{equation}\label{PFPHI1}
\Phi_{1}(u,v):=e^{-tA_{\alpha-1}}u_{0}+\int_{0}^{t}e^{-(t-\tau)A_{\alpha-1}}(g_{1}(\tau)+ A_{\alpha-1} B^{c}  \gamma h_{1}(\tau))d\tau,
\end{equation}
\begin{equation}\label{PFPHI2}
\Phi_{2}(u,v):=e^{-tA_{\alpha-1}}v_{0}+\int_{0}^{t}e^{-(t-\tau)A_{\alpha-1}}(g_{2}(\tau)+ A_{\alpha-1} B^{c} \gamma h_{2}(\tau))d\tau,
\end{equation}
where  $\gamma$, $A_{\al-1}$ , and $B^{c}$  are as previously defined with  \eqref{TRNOT},  \eqref{aal} and  \eqref{bc}.  

We now show $\Phi_{1}(u,v),\Phi_{2}(u,v)\in S_{T}$.  Let $1<\beta<2\alpha$.
Using \eqref{NONEST} and $W^{\beta,p}(\Omega)\hookrightarrow W^{1,p}(\Omega)$, we compute
\[
\begin{aligned}
\|&\Phi_{1}(u,v)(t)\|_{W^{1,p}(\Omega)}\\
&\le \norm{e^{-tA_{\alpha-1}}u_{0}}_{W^{1,p}(\Omega)}+\int_{0}^{t}\norm{e^{-(t-\tau)A_{\alpha-1}}(g_{1}(\tau)+ A_{\alpha-1} B^{c}  \gamma h_{1}(\tau))}_{W^{1,p}(\Omega)}d\tau\\
&\le K_{1}\norm{u_{0}}_{W^{1,p}(\Omega)}+C\int_{0}^{t}\norm{e^{-(t-\tau)A_{\alpha-1}}(g_{1}(\tau)+ A_{\alpha-1} B^{c}  \gamma h_{1}(\tau))}_{W^{\beta,p}(\Omega)}d\tau\\
&\le  K_{1}\norm{u_{0}}_{W^{1,p}(\Omega)}+C\int_{0}^{t}e^{-\kappa(t-\tau)}(t-\tau)^{-\sigma}(\norm{g_{1}(\tau)}_{W^{2\alpha-2,p}_{B}}+ \norm{A_{\alpha-1} B^{c} \gamma h_{1}(\tau)}_{W^{2\alpha-2,p}_{B}})d\tau.
\end{aligned}
\]
Note that $A_{\alpha-1} B^{c}\gamma $ is well defined due to  Lemma~\ref{LEMKS}  and
\begin{equation}\label{BDRY}
 W^{1-\frac{1}{p},p}(\partial\Omega)\hookrightarrow W^{2\alpha-1-\frac{1}{p},p}(\partial\Omega),\qquad A_{\alpha-1}B^{c}\in\mathcal{L}(W^{2\alpha-1-\frac{1}{p},p}(\partial\Omega), W^{2\alpha-2,p}_{B}).
\end{equation}
\
Using $L^{p}(\Omega)=W^{0,p}_{B}\hookrightarrow W^{2\alpha-2,p}_{B}$,  Lemma~\ref{LEMKS}, and $W^{1,p}(\Omega)\hookrightarrow L^{\infty}(\Omega)$, we can estimate  
\[
\norm{g_{1}(\tau)}_{W^{2\alpha-2,p}_{B}}\le C\norm{g_{1}(\tau)}_{L^{p}(\Omega)}\le C(R_0+R_0^{2}).
\]
Using  Lemma~\ref{LEMKS} and   \eqref{BDRY}, we also compute   that
\[
 \norm{A_{\alpha-1} B^{c}  \gamma h_{1}(\tau))}_{W^{2\alpha-2,p}_{B}} \le C\norm{u\mathcal{K}[u,v](\tau)\cdot\mathcal{N}}_{W^{1,p}(\Omega)}\le CR_0^{2}.
\]
Then, combining the above computations leads to
\[
\norm{\Phi_{1}(u,v)(t)}_{W^{1,p}(\Omega)}\le K_{1}\norm{u_{0}}_{W^{1,p}(\Omega)}+C_{1}(R_0+R_0^{2})t^{1-\sigma},
\]
where $C_{1}$ is a positive constant independent of $R_0$.
Analogously to above,   we can  see that 
\[
\norm{\Phi_{2}(u,v)(t)}_{W^{1,p}(\Omega)}\le K_{2}\norm{v_{0}}_{W^{1,p}(\Omega)}+C_{2}(R_0+R_0^{2})t^{1-\sigma},
\]
where  $C_{2}$ is a positive constant independent of $R_0$. 
Choosing 
\[
R_0:=K_{1}\norm{u_{0}}_{W^{1,p}(\Omega)}+K_{2}\norm{v_{0}}_{W^{1,p}(\Omega)}+1,
\] 
\[
T<T_{1}:=\min\bket{ 1, \frac{1}{(2C_{1}(R_0+R_0^{2}))^{\frac{1}{1-\sigma}}}, \frac{1}{(2C_{2}(R_0+R_0^{2}))^{\frac{1}{ 1-\sigma}}} },
\]
and taking supremum over $0<t\le T$, we have
$
\Phi_{1}(u,v),\Phi_{2}(u,v)\in S_{T}$.

We next show the mapping $(u,v)\mapsto (\Phi_{1},\Phi_{2})$ is a contraction. Note from \eqref{PFPHI1}--\eqref{PFPHI2} that
\[
\Phi_{1}(u,v)(t)-\Phi_{1}(\tilde{u},\tilde{v})(t)=\int_{0}^{t}e^{-(t-\tau)A_{\alpha-1}}((g_{1}-\tilde{g}_{1})(\tau)+ A_{\alpha-1} B^{c}  \gamma (h_{1}-\tilde{h}_{1})(\tau))d\tau,
\]
\[
\Phi_{2}(u,v)(t)-\Phi_{2}(\tilde{u},\tilde{v})(t)=\int_{0}^{t}e^{-(t-\tau)A_{\alpha-1}}((g_{2}-\tilde{g}_{2})(\tau)+ A_{\alpha-1} B^{c}  \gamma (h_{2}-\tilde{h}_{2})(\tau))d\tau,
\]
where 
\[
\tilde{g}_{1}=-\nabla \cdot (\tilde{u} \,\mathcal{K}[\tilde{u},\tilde{v}]     )+(1-m)\tilde{u} +\displaystyle\frac{\lambda}{k} \tilde{u}(k-(\tilde{u}+\tilde{v})),\qquad \tilde{h}_{1}:=\tilde{u}\mathcal{K}[\tilde{u},\tilde{v}]\cdot\mathcal{\mathcal{N}},
\]
\[
\tilde{g}_{2}=-\nabla \cdot (\tilde{v} \,\mathcal{S}[\tilde{u},\tilde{v}]     )+ m \tilde{u}+\tilde{v} +\displaystyle\frac{\mu}{k} \tilde{v}(k-(\tilde{u}+\tilde{v})),\qquad \tilde{h}_{2}:=\tilde{v}\mathcal{S}[\tilde{u},\tilde{v}]\cdot\mathcal{\mathcal{N}}.
\]
Then, by similar  computations as above, we have
\[
\sup_{t\le T}\norm{(\Phi_{1},\Phi_{2})(u,v)(t)-(\Phi_{1},\Phi_{2})(\tilde{u},\tilde{v})(t)}_{W^{1,p}(\Omega)}
\]
\[
\le C_{3}(R_0+1)T^{1-\sigma}\sup_{t\le T}\norm{ (u,v)(t)- (\tilde{u},\tilde{v})(t)}_{W^{1,p}(\Omega)}, 
\]
where $C_{3}>0$ is a constant independent of $R_0$.
Taking 
\[
T<T_{2}:=\min \bket{ T_{1},  \frac{1}{(2C_{3}(R_0+1))^{\frac{1}{1-\sigma}}} },
\]
we obtain 
\[
\sup_{t\le T}\norm{(\Phi_{1},\Phi_{2})(u,v)(t)-(\Phi_{1},\Phi_{2})(\tilde{u},\tilde{v})(t)}_{W^{1,p}(\Omega)}\le \frac{1}{2} \sup_{t\le T}\norm{ (u,v)(t)- (\tilde{u},\tilde{v})(t)}_{W^{1,p}(\Omega)},
\]
i.e., the mapping is a contraction. According to the Banach fixed point theorem, this mapping has a fixed point in $S_{T}$, denoted again $(u,v)$.  Thus, the generalized solution $(u,v)$ for \eqref{NPBV0} is obtained. 
By the standard extension argument and the fact that the above choice of $T$ depends only on $\|u_0\|_{W^{1,p}(\Omega)}$, and $\|v_0\|_{W^{1,p}(\Omega)}$, it should be noted that there exists $T_{\rm M}\le \infty$ such that $ u,v\in C([0,T_{M}); W^{1,p}(\Omega))$, and 
\begin{equation}\label{TMP}
\mbox{either }\, T_{\rm M}=\infty,\,\mbox{ or }\, \lim_{t\rightarrow T_{\rm M}}(\norm{u(\cdot,t)}_{W^{1,p}(\Omega)}+\norm{v(\cdot,t)}_{W^{1,p}(\Omega)})=\infty.
\end{equation}

We next show the uniqueness. Let $T<T_{M}$, and let $(u,v)$ and $(\tilde{u}, \tilde{v})$ be two constructed solutions for $t\le T$.
Analogously to above, we can estimate
\[
\begin{aligned}
\|( u&-\tilde{u})(t)\|_{W^{1,p}(\Omega)}\\
&\le \int_{0}^{t} e^{-\kappa(t-\tau)}(t-\tau)^{-\sigma}(\norm{( g_{1}-\tilde{g}_{1})(\tau)}_{L^{p}(\Omega)}+\|A_{\alpha-1}B^{c}\gamma( h_{1}-\tilde{h}_{1})(\tau)\|_{L^{p}(\Omega)} ) d\tau,
\end{aligned}
\]
\[
\begin{aligned}
\|( g_{1}&-\tilde{g}_{1})(\tau)\|_{L^{p}(\Omega) }
\\
&\le C\sup_{t\le T}(\norm{u(t)}_{W^{1,p}(\Omega)}+ \norm{v(t)}_{W^{1,p}(\Omega)}+ \norm{\tilde{u}(t)}_{W^{1,p}(\Omega)}+\norm{\tilde{v}(t)}_{W^{1,p}(\Omega)})
\\
&\qquad\times( \|( u-\tilde{u})(\tau)\|_{W^{1,p}(\Omega)}+ \|( v-\tilde{v})(\tau)\|_{W^{1,p}(\Omega)})
\\
&\le C( \|( u-\tilde{u})(\tau)\|_{W^{1,p}(\Omega)}+ \|( v-\tilde{v})(\tau)\|_{W^{1,p}(\Omega)}),
\end{aligned}
\]
and
\[
\begin{aligned}
\| A_{\alpha-1}B^{c}\gamma( h_{1}&-\tilde{h}_{1})(\tau)\|_{L^{p}(\Omega)}\\
&
\le \|(h_1-\tilde{h}_1)(\tau)\|_{W^{1,p}}(\Omega)\\
&\le C\sup_{t\le T}(\norm{u(t)}_{W^{1,p}(\Omega)}+ \norm{v(t)}_{W^{1,p}(\Omega)}+ \norm{\tilde{u}(t)}_{W^{1,p}(\Omega)}+\norm{\tilde{v}(t)}_{W^{1,p}(\Omega)})
\\
&\qquad\times( \|( u-\tilde{u})(\tau)\|_{W^{1,p}(\Omega)}+ \|( v-\tilde{v})(\tau)\|_{W^{1,p}(\Omega)})
\\
&\le C( \|( u-\tilde{u})(\tau)\|_{W^{1,p}(\Omega)}+ \|( v-\tilde{v})(\tau)\|_{W^{1,p}(\Omega)}).
\end{aligned}
\]
Thus, we have
\[
\|( u-\tilde{u})(t)\|_{W^{1,p}(\Omega)}\le C \int_{0}^{t} e^{-\kappa(t-\tau)}(t-\tau)^{-\sigma}( \|( u-\tilde{u})(\tau)\|_{W^{1,p}(\Omega)}+ \|( v-\tilde{v})(\tau)\|_{W^{1,p}(\Omega)})d\tau.
\]
Similarly, we also have 
\[
\|( v-\tilde{v})(t)\|_{W^{1,p}(\Omega)}\le C \int_{0}^{t} e^{-\kappa(t-\tau)}(t-\tau)^{-\sigma}( \|( u-\tilde{u})(\tau)\|_{W^{1,p}(\Omega)}+ \|( v-\tilde{v})(\tau)\|_{W^{1,p}(\Omega)})d\tau.
\]
Adding the above two estimates and 
using a Gr\"onwall type inequality,   $(u,v)=(\tilde{u},\tilde{v})$ is obtained for $t\le T$. Since $T<T_{\rm M}$ is arbitrary, we have the uniqueness of solutions.\\
 {\bf Step 2} (Strong solution)
 We next consider the regularity of the constructed solution $(u,v)$. Let $t\le T<T_{M}$.
 As $1<\beta<2\alpha<1+\frac{1}{p}$, we first note that
\begin{equation}\label{INITWBP}
\| e^{-tA_{\alpha-1}}u_{0}\|_{W^{\beta,p}(\Omega)}\le C\|u_{0}\|_{W^{2,p}(\Omega)}, \qquad \| e^{-tA_{\alpha-1}}v_{0}\|_{W^{\beta,p}(\Omega)}\le C\|v_{0}\|_{W^{2,p}(\Omega)}.
\end{equation}
If we replace the computations for the initial counterparts  in the previous step  by \eqref{INITWBP}, then we    have   
$
 u,v\in C ([0,T]; W^{\beta,p  }_{B})$.
Thus, as in  \cite[(3.5)]{A86}, it can be shown that 
\[
  u,v\in C^{1}([0,T]; W^{\beta-2,p  }_{B})\quad\mbox{ as well as}\quad u,v\in C^{\frac{1}{2}-\frac{1}{2p}}([0,T]; W^{\beta-1+\frac{1}{p}, p  }_{B}).
  \] Therefore, we  have $u,v\in W^{\frac{1}{2}-\frac{1}{2p},p}(0,T; W^{\beta-1, p  }(\partial\Omega))$ and, in particular, 
  \[
h_{1},h_{2}\in W^{\frac{1}{2}-\frac{1}{2p},p}(0,T; W^{\beta-1, p  }(\partial\Omega)).
  \]
Due to these facts along with  $h_{1}, h_{2}\in  L^{\infty}(0,T; W^{1-\frac{1}{p},p}(\partial\Omega))$ and  $g_{1}, g_{2}\in  L^{\infty}(0,T; L^{p}(\Omega))$, by
applying the maximal regularity theorem \cite[Theorem 2.1]{DHP07}  to  
\begin{equation}\label{NPBV00}
\begin{aligned}
&\partial_{t}f_{1}+(I-\Delta)f_{1}=g_{1},&  &\partial_{t}f_{2}+(I-\Delta)f_{2}=g_{2},   \quad\qquad&x\in\Omega, \,\,t\le T,\,\,\,&\\
&\partial_{\nu}f_{1}= h_1, &\qquad\quad  &\partial_{\nu} f_{2}=h_2,  &x\in\partial\Omega, \,\,t\le T,&\\
&f_{1}(x,0) =u_{0}(x),&\qquad  &f_{2}(x,0) =v_{0}(x),  &x\in\partial\Omega,\quad\qquad\,&
\end{aligned}
\end{equation}
we have the unique strong solution $(f_1,f_2)$ for \eqref{NPBV00} in the class
\[
f_1,f_2\in W^{1,p}(0,T;L^{p}(\Omega))\cap L^{p}(0,T;W^{2,p}(\Omega)),\qquad T<T_{\rm M}.
\]
Since the strong solution is the generalized solution (\cite{A86}, \cite{LaTri}), we put  \eqref{NPBV00} into the generalized variation-of-constants formulas with respect to $f_1$, $f_2$ with the same right hand side terms as in \eqref{INTF}, \eqref{INTF1}. 
Then we have   $(f_1,f_2)=(u,v)$  due to the  uniqueness of the generalized solution  and thus,
\[
u,v\in W^{1,p}(0,T;L^{p}(\Omega))\cap L^{p}(0,T;W^{2,p}(\Omega)),\qquad T<T_{\rm M}.
\]
 {\bf Step 3} (Non-negativity)
It remains to show the non-negativity of the constructed solution $(u,v)$. Let $t\le T<T_{M}$. Define $u_{-}:=-\min\bket{u,0}$.
Multiplying  the $u$-equation in \eqref{NPBV0} by $u_{-}$ and integrating over $\Omega$,  using the direct computation and Young's inequality, we can compute 
\begin{equation}\label{UNU}
\begin{aligned}
\frac{1}{2}\frac{d}{dt}&\int_{\Omega}|u_{-}|^{2}+\int_{\Omega}|\nabla u_{-}|^{2}\\
&
\le \frac{1}{2}\int_{\Omega}|\nabla u_{-}|^{2}+C(1+ \|\mathcal{K}[u,v]\|_{L^{\infty}(\Omega)}^{2}+\|u\|_{L^{\infty}(\Omega)}+\|v\|_{L^{\infty}(\Omega)})\int_{\Omega}|u_{-}|^{2},\qquad t\le T,
\end{aligned}
\end{equation}
where we used Lemma~\ref{LEMKS} and $W^{1,p}(\Omega)\hookrightarrow L^{\infty}(\Omega)$.
Then,  $u\ge0$  can be obtained by using Gr\"onwall's lemma.  Similarly, testing the $v$-equation in  \eqref{NPBV0} by $v_-:=- \min\{v,0\}$ and using Young's inequality, we observe that 
\begin{equation}\label{UNV}
\begin{aligned}
\frac{1}{2}\frac{d}{dt}&\int_{\Omega}|v_{-}|^{2}+\int_{\Omega}|\nabla v_{-}|^{2} \\
&\le \frac{1}{2}\int_{\Omega}|\nabla v_{-}|^{2}+C(1+ \|\mathcal{S}[u,v]\|_{L^{\infty}(\Omega)}^{2}+\|u\|_{L^{\infty}(\Omega)}+\|v\|_{L^{\infty}(\Omega)})\int_{\Omega}|v_{-}|^{2}-m\int_{\Omega}uv_{-}
\\
&\le \frac{1}{2}\int_{\Omega}|\nabla v_{-}|^{2}+C\int_{\Omega}|v_{-}|^{2},\quad t\le T.
\end{aligned}
\end{equation}
Again by Gr\"onwall's lemma, we have $v\ge0$.  Since $T<T_{\rm M}$ is arbitrary, we obtain the non-negativity of solutions.
Finally taking $T_{\rm max}$ as $T_{\rm M}$  in \eqref{TMP} completes the proof.
\end{proof}

We next prove a refined blow-up criteria.
\begin{lemma}\label{BUCLEM}
Let the same assumptions as in Lemma~\ref{LEM1} be satisfied. The solution  $(u,v)$ of  \eqref{NPBV0} given by Lemma~\ref{LEM1} satisfies
\[
\mbox{either }\, T_{\rm max}=\infty,\,\mbox{ or }\, \lim_{t\rightarrow T_{\rm max}}(\norm{u(\cdot,t)}_{L^{\infty}(\Omega)}+\norm{v(\cdot,t)}_{L^{\infty}(\Omega)})=\infty.
\]
\end{lemma}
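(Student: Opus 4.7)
The plan is to argue by contradiction. Suppose that $T_{\rm max}<\infty$ while $M:=\sup_{0\le t<T_{\rm max}}(\|u(\cdot,t)\|_{\iom}+\|v(\cdot,t)\|_{\iom})<\infty$; I will derive a bound on $\|u(\cdot,t)\|_{W^{1,p}(\Om)}+\|v(\cdot,t)\|_{W^{1,p}(\Om)}$ on $[0,T_{\rm max})$, which contradicts the criterion \eqref{BUC} of Lemma~\ref{LEM1}.

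First I would exploit Lemma~\ref{LEMKS} together with the $L^{\infty}$ hypothesis to get
\[
\sup_{0\le t<T_{\rm max}} \bke{ \|\calK[u,v](\cdot,t)\|_{W^{1,\infty}(\Om)} + \|\calS[u,v](\cdot,t)\|_{W^{1,\infty}(\Om)} } \le C(M,|\Om|),
\]
since $\|u\|_{\oom}+\|v\|_{\oom}\le |\Om|M$. I would then revisit the nonlinear source terms and the boundary data in \eqref{INTF1}. Expanding $-\nabla\cdot(u\,\calK[u,v])=-\nabla u\cdot\calK[u,v]-u\,(\nabla\cdot\calK[u,v])$ and using the $L^{\infty}$ bound on $u,v$ together with the bound above, I get
\[
\|g_{1}(\tau)\|_{L^{p}(\Om)}+\|g_{2}(\tau)\|_{L^{p}(\Om)} \le C\bke{1+\|u(\tau)\|_{W^{1,p}(\Om)}+\|v(\tau)\|_{W^{1,p}(\Om)}}.
\]
For the boundary data, using the product rule and that $\calN\in C^{1}(\overline\Om)$,
\[
\|h_{i}(\tau)\|_{W^{1,p}(\Om)} \le C\bke{\|u(\tau)\|_{W^{1,p}(\Om)}+\|v(\tau)\|_{W^{1,p}(\Om)}},
\]
and hence by the trace inequality \eqref{TRNOT} and \eqref{BDRY},
\[
\|A_{\alpha-1}B^{c}\gamma h_{i}(\tau)\|_{W^{2\alpha-2,p}_{B}} \le C\bke{\|u(\tau)\|_{W^{1,p}(\Om)}+\|v(\tau)\|_{W^{1,p}(\Om)}},\qquad i=1,2.
\]

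Next I would apply the smoothing estimate \eqref{NONEST} (with some fixed $1<\beta<2\alpha<1+\tfrac{1}{p}$, so that $W^{\beta,p}(\Om)\hookrightarrow W^{1,p}(\Om)$ and $\sigma=\sigma(\beta)<1$) to the generalized variation-of-constants formula \eqref{INTF}. Setting $Y(t):=\|u(\cdot,t)\|_{W^{1,p}(\Om)}+\|v(\cdot,t)\|_{W^{1,p}(\Om)}$, the estimates above together with $L^{p}\hookrightarrow W^{2\alpha-2,p}_{B}$ yield
\[
Y(t)\le C\bke{\|u_{0}\|_{W^{1,p}(\Om)}+\|v_{0}\|_{W^{1,p}(\Om)}}+C\int_{0}^{t}(t-\tau)^{-\sigma}e^{-\kappa(t-\tau)}\bke{1+Y(\tau)}\,d\tau,
\]
valid for $0\le t<T_{\rm max}$.

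Since the kernel $(t-\tau)^{-\sigma}$ is integrable on $[0,T_{\rm max})$, a singular Gr\"onwall lemma (Henry's inequality) gives $\sup_{0\le t<T_{\rm max}}Y(t)<\infty$, contradicting \eqref{BUC}. Therefore the assumption $M<\infty$ fails, which proves the claim. The one step that requires care is the treatment of the Robin-type boundary term $A_{\alpha-1}B^{c}\gamma h_{i}$: unlike the pure Neumann case, $h_{i}$ is not identically zero on $\pa\Om$, so one must use the product structure $h_{i}=u\,\calK[u,v]\cdot\calN$ (or the $v$-analog) together with the uniform $W^{1,\infty}$-control on $\calK[u,v]$ and $\calS[u,v]$ supplied by Lemma~\ref{LEMKS} in order to reduce the boundary contribution to a linear bound in $Y(\tau)$; this is the only place where the nonlocal Robin structure enters nontrivially, and the rest of the argument is essentially the same bootstrap used in the existence proof of Lemma~\ref{LEM1}.
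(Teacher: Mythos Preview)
Your argument is correct. Both you and the paper reduce the problem to showing that an $L^{\infty}$ bound on $[0,T_{\rm max})$ forces a $W^{1,p}$ bound, and both feed the same estimates on $g_{i}$ and $A_{\alpha-1}B^{c}\gamma h_{i}$ (via Lemma~\ref{LEMKS}, \eqref{BDRY}, and $L^{p}\hookrightarrow W^{2\alpha-2,p}_{B}$) into the variation-of-constants formula together with the smoothing estimate \eqref{NONEST}.

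The route diverges at the last step. You integrate from $0$, arrive at a Volterra-type inequality
\[
Y(t)\le C+C\int_{0}^{t}(t-\tau)^{-\sigma}\,Y(\tau)\,d\tau,
\]
and invoke Henry's singular Gr\"onwall lemma. The paper instead restarts the Duhamel formula at $T_{\rm max}-\varepsilon$, obtains
\[
\sup_{T_{\rm max}-\varepsilon<t<T_{\rm max}}Y(t)\le C\,Y(T_{\rm max}-\varepsilon)+C\varepsilon^{1-\sigma}\sup_{T_{\rm max}-\varepsilon<t<T_{\rm max}}Y(t),
\]
and absorbs the last term by taking $\varepsilon$ small. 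Your approach is cleaner and works verbatim on any finite interval, at the cost of quoting the singular Gr\"onwall inequality; the paper's small-interval absorption is more elementary (no auxiliary lemma) but relies on the specific structure that the coefficient in front of the integral is independent of the starting time. Either way the conclusion is the same.
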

\begin{proof}
By \eqref{BUC}, it suffices to show that if  
\begin{equation}\label{BUCPF0}
\norm{u}_{L^{\infty}(0,T_{\rm max};L^{\infty}(\Omega))}+\norm{v}_{L^{\infty}(0,T_{\rm max};L^{\infty}(\Omega))}\le C,\qquad T_{\rm max}<\infty,
\end{equation}
 then
\begin{equation}\label{BUCPF1}
\norm{u}_{L^{\infty}(0,T_{\rm max};W^{1,p}(\Omega))}+\norm{v}_{L^{\infty}(0,T_{\rm max};W^{1,p}(\Omega))}\le C.
\end{equation}
Let $p>n$, $2\alpha\in (1, 1+\frac{1}{p})$, $1<\beta<2\alpha$, and let $\varepsilon>0$ be a number such that $0<T_{\rm max}-\varepsilon<t<T_{\rm max}$. 
Using \eqref{INTF}, \eqref{NONEST} and $W^{\beta,p}(\Omega)\hookrightarrow W^{1,p}(\Omega)$, we compute
\[
\begin{aligned}
\|u&(t)\|_{W^{1,p}(\Omega)} \\
&\le K_{1}\norm{u(T_{\rm max}-\varepsilon)}_{W^{1,p}(\Omega)}\\
&\quad+C\int_{T_{\rm max}-\varepsilon}^{t}\|e^{-(t-\tau)A_{\alpha-1}}(g_{1}(\tau)+ A_{\alpha-1} B^{c} \gamma h_{1}(\tau))\|_{W^{\beta,p}(\Omega)}d\tau\\
&\le  K_{1}\norm{u(T_{\rm max}-\varepsilon)}_{W^{1,p}(\Omega)}\\
&\quad+C\int_{T_{\rm max}-\varepsilon}^{t}e^{-\kappa(t-\tau)}(t-\tau)^{-\sigma}(\norm{g_{1}(\tau)}_{W^{2\alpha-2,p}_{B}}+ \norm{A_{\alpha-1} B^{c} \gamma h_{1}(\tau))}_{W^{2\alpha-2,p}_{B}})d\tau,
\end{aligned}
\]
where   $\gamma\in\mathcal{L}( W^{1,p}(\Omega), W^{1-\frac{1}{p},p}(\partial\Omega))$ is the boundary trace operator, $B^{c}$ is the continuous extension of $(B|_{Ker(I-\Delta)})^{-1}$ to $W^{2\alpha-1-\frac{1}{p}}(\partial\Omega)$, and $h_{1}$, $g_{1}$ are given in \eqref{INTF1}.
   Using $L^{p}(\Omega)=W^{0,p}_{B}\hookrightarrow W^{2\alpha-2,p}_{B}$,  Lemma~\ref{LEMKS}, \eqref{BDRY}, and \eqref{BUCPF0}, we compute
\[
\begin{aligned}
\|g_{1}&(\tau)\|_{W^{2\alpha-2,p}_{B}}\\
&\le C\norm{g_{1}(\tau)}_{ L^{p}(\Om)}\\
 &\le C(\norm{u(\tau)}_{L^{\infty}(\Omega)}+\norm{u(\tau)}_{L^{\infty}(\Omega)}^{2}+\norm{u(\tau)}_{L^{\infty}(\Omega)}\norm{v(\tau)}_{L^{\infty}(\Omega)})\\
&\quad+C(\norm{u(\tau)}_{W^{1,p}(\Omega)}\norm{\mathcal{K}[u,v](\tau)}_{L^{\infty}(\Omega)}+\norm{u(\tau)}_{L^{\infty}(\Omega)}\norm{\mathcal{K}[u,v](\tau)}_{W^{1,p}(\Omega)})\\
 &\le C
+C(\norm{u(\tau)}_{W^{1,p}(\Omega)}+\norm{v(\tau)}_{W^{1,p}(\Omega)}),
\end{aligned}
\]
and 
\[
\begin{aligned}
\|A_{\alpha-1}& B^{c}  \gamma h_{1}(\tau))\|_{W^{2\alpha-2,p}_{B}}\\
&
 \le C\norm{u\mathcal{K}[u,v](\tau)\cdot\mathcal{N}}_{W^{1,p}(\Omega)}\\
 &
 \le C(\norm{u(\tau)}_{W^{1,p}(\Omega)}\norm{\mathcal{K}[u,v](\tau)}_{L^{\infty}(\Omega)}+\norm{u(\tau)}_{L^{\infty}(\Omega)}\norm{\mathcal{K}[u,v](\tau)}_{W^{1,p}(\Omega)})\\
  &\le C
+C(\norm{u(\tau)}_{W^{1,p}(\Omega)}+\norm{v(\tau)}_{W^{1,p}(\Omega)}).
\end{aligned}
\]
It then follows that
\[
\begin{aligned}
\|u&(t)\|_{W^{1,p}(\Omega)} \\
&\le K_{1}\norm{u(T_{\rm max}-\varepsilon)}_{W^{1,p}(\Omega)}
+C_{4}\varepsilon^{1-\sigma}\sup_{T_{\rm max}-\varepsilon<t<T_{\rm max}}(\norm{u(t)}_{W^{1,p}(\Omega)}+\norm{v(t)}_{W^{1,p}(\Omega)}),
\end{aligned}
\]
where $C_{4}$ is a positive constant independent of $\varepsilon$.
 Analogously to above, we can compute  
\[
\begin{aligned}
\|v&(t)\|_{W^{1,p}(\Omega)} \\
&\le K_{2}\norm{v(T_{\rm max}-\varepsilon)}_{W^{1,p}(\Omega)}
+C_{5}\varepsilon^{1-\sigma}\sup_{T_{\rm max}-\varepsilon<t<T_{\rm max}}(\norm{u(t)}_{W^{1,p}(\Omega)}+  \norm{v(t)}_{W^{1,p}(\Omega)}),
\end{aligned}
\]
where $C_{5}$ is a positive constant independent of $\varepsilon$. Adding above two inequalities and taking supremum over $T_{\rm max}-\varepsilon<t<T_{\rm max}$, we have 
\[
\begin{aligned}
\sup_{T_{\rm max}-\varepsilon<t<T_{\rm max}}\bigr{(}\| &u(t)\|_{W^{1,p}(\Omega)} +\|v(t)\|_{W^{1,p}(\Omega)} \bigr{)}\\
&\le K_{1}\norm{u(T_{\rm max}-\varepsilon)}_{W^{1,p}(\Omega)}+K_{2}\norm{v(T_{\rm max}-\varepsilon)}_{W^{1,p}(\Omega)}\\
&\quad
+(C_{4}+C_{5})\varepsilon^{1-\sigma}\sup_{T_{\rm max}-\varepsilon<t<T_{\rm max}}\bigr{(}\norm{u(t)}_{W^{1,p}(\Omega)} +\norm{v(t)}_{W^{1,p}(\Omega)} \bigr{)}.
\end{aligned}
\]
Therefore,  taking  sufficiently small $\varepsilon$, \eqref{BUCPF1} is obtained. This completes the proof.
\end{proof}

\end{subsection}

\begin{subsection}{ A priori estimates}\label{SEC23}
Next, we provide some {\it a priori} estimates (Lemma~\ref{LEML1} and  Lemma~\ref{LEMLINFUV}).

\begin{lemma}\label{LEML1}
Let the same assumptions as in Lemma~\ref{LEM1} be satisfied.  The solution  $(u,v)$ of \eqref{NPBV0} given by Lemma~\ref{LEM1} for $T<T_{\rm max}$ satisfies
\begin{equation}\label{UL1}
\sup_{t\le T}\int_{\Omega}u(\cdot,t)\le C(\norm{u_{0}}_{L^{1}(\Omega)}),
\end{equation}
\begin{equation}\label{VL1}
\sup_{t\le T}\int_{\Omega}v(\cdot,t)\le C(\norm{u_{0}}_{L^{1}(\Omega)}, \norm{v_{0}}_{L^{1}(\Omega)}),
\end{equation}
\begin{equation}\label{KLINF}
\sup_{t\le T} \norm{\mathcal{K}[u,v](\cdot, t)}_{L^{\infty}(\Omega)}\le C(\norm{u_{0}}_{L^{1}(\Omega)},  \norm{v_{0}}_{L^{1}(\Omega)}),
\end{equation}
and
\begin{equation}\label{SLINF}
\sup_{t\le T} \norm{\mathcal{S}[u,v](\cdot, t)}_{L^{\infty}(\Omega)}\le C(\norm{u_{0}}_{L^{1}(\Omega)},  \norm{v_{0}}_{L^{1}(\Omega)}).
\end{equation}
\end{lemma}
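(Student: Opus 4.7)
The plan is to integrate the $u$- and $v$-equations over $\Omega$, exploiting the Robin-type no-flux condition to cancel every boundary integral, and then to reduce the resulting scalar inequalities to logistic-type ODEs whose solutions are uniformly bounded. The $L^\infty$ estimates on $\calK[u,v]$ and $\calS[u,v]$ will then follow from Lemma~\ref{LEMKS}.

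\textbf{Step 1: bound on $\int_\Omega u$.} Integrating the $u$-equation in \eqref{MODEL00} over $\Omega$ and using the divergence theorem gives
\[
\frac{d}{dt}\int_\Omega u = \int_{\pa\Omega}(\pa_\nu u - u\,\calK[u,v]\cdot\nu)\,dS - m\int_\Omega u + \frac{\lambda}{k}\int_\Omega u(k-u-v).
\]
The boundary integral vanishes by the Robin condition in \eqref{MODEL00}. Using non-negativity of $u$ and $v$ (from Lemma~\ref{LEM1}) to discard the $-\frac{\lambda}{k}uv$ term, and applying Cauchy--Schwarz $(\int_\Omega u)^2\le|\Omega|\int_\Omega u^2$, I obtain for $y(t):=\int_\Omega u$ the logistic inequality
\[
y'(t)\le(\lambda-m)y(t)-\frac{\lambda}{k|\Omega|}y(t)^2.
\]
A standard comparison argument then yields $y(t)\le\max\{y(0),\,k|\Omega|(\lambda-m)_+/\lambda\}$, proving \eqref{UL1}.

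\textbf{Step 2: bound on $\int_\Omega v$.} Integrating the $v$-equation and again using the Robin boundary condition to kill the flux terms, I get
\[
\frac{d}{dt}\int_\Omega v = m\int_\Omega u + \frac{\mu}{k}\int_\Omega v(k-u-v).
\]
Using \eqref{UL1} to estimate $m\int_\Omega u\le C_0(\|u_0\|_{L^1(\Omega)})$, dropping the non-positive term $-\frac{\mu}{k}\int_\Omega uv$, and applying Cauchy--Schwarz as before, I arrive at
\[
z'(t)\le C_0+\mu z(t)-\frac{\mu}{k|\Omega|}z(t)^2,\qquad z(t):=\int_\Omega v.
\]
Since the right-hand side is strictly negative whenever $z$ exceeds the larger root of the quadratic, $z$ is uniformly bounded by a constant depending only on $\|u_0\|_{L^1(\Omega)}$ and $\|v_0\|_{L^1(\Omega)}$, giving \eqref{VL1}.

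\textbf{Step 3: bounds on $\calK[u,v]$ and $\calS[u,v]$.} By Lemma~\ref{LEMKS} applied pointwise in $t$ with $f=u(\cdot,t)$ and $g=v(\cdot,t)$,
\[
\|\calK[u,v](\cdot,t)\|_{L^\infty(\Omega)}+\|\calS[u,v](\cdot,t)\|_{L^\infty(\Omega)}\le C\bigl(\|u(\cdot,t)\|_{L^1(\Omega)}+\|v(\cdot,t)\|_{L^1(\Omega)}\bigr),
\]
and combining with Steps 1--2 yields \eqref{KLINF}--\eqref{SLINF}. There is no real obstacle here; the only point requiring care is verifying that the advective boundary term $\int_{\pa\Omega}u\,\calK[u,v]\cdot\nu\,dS$ is well-defined and that the Robin condition is satisfied in a trace sense, both of which follow from $u,v\in C([0,T];W^{1,p}(\Omega))$ with $p>n$ and the $W^{1,\infty}$ bound on $\calK,\calS$ from Lemma~\ref{LEMKS}.
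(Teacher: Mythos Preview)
Your proof is correct and follows essentially the same approach as the paper: integrate each equation over $\Omega$, use the no-flux boundary condition to eliminate the boundary terms, and exploit the quadratic damping from the logistic reaction to close a scalar ODE inequality; then invoke Lemma~\ref{LEMKS} for \eqref{KLINF}--\eqref{SLINF}. The only cosmetic difference is that the paper uses Young's inequality to absorb $(|\lambda-m|+1)\int_\Omega u$ into $\frac{\lambda}{k}\int_\Omega u^2$ and obtains a linear inequality $y'+y\le C$, whereas you use Cauchy--Schwarz to get a logistic inequality $y'\le(\lambda-m)y-\frac{\lambda}{k|\Omega|}y^2$; both yield the same uniform bound.
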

\begin{proof}
Integrating $\eqref{MODEL00}_{1}$ and $\eqref{MODEL00}_{2}$ over $\Omega$, we obtain
\begin{equation}\label{LEML1_1}
\frac{d}{dt}\int_{\Omega}u+\frac{\lambda}{k}\int_{\Omega}u(u+v)=(\lambda-m)\int_{\Omega}u,
\end{equation}
and
\begin{equation}\label{LEML1_2}
\frac{d}{dt}\int_{\Omega}v+\frac{\mu}{k}\int_{\Omega}v(u+v)=m\int_{\Omega}u+ \mu\int_{\Omega}v.
\end{equation}
As we have 
\[
(|\lambda-m|+1)\int_{\Omega}u\le \frac{\lambda}{k}\int_{\Omega}u^{2}+C,
\]
it follows from \eqref{LEML1_1}    that
\[
y'+ y\le C,\qquad y(t):=\int_{\Omega}u(\cdot,t).
\]
Therefore,  \eqref{UL1} is obtained  by standard ODE argument.
Similarly, as Young's inequality gives
\[
(\mu+1)\int_{\Omega}v\le \frac{\mu}{k}\int_{\Omega}v^{2}+C,
\]
it follows from  \eqref{UL1}  and \eqref{LEML1_2} that
\[
\frac{d}{dt}\int_{\Omega}v+\int_{\Omega}v\le C(\norm{u_{0}}_{L^{1}(\Omega)}).
\]
Thus, we can also obtain \eqref{VL1}. Then, \eqref{KLINF} and \eqref{SLINF} are direct consequences of \eqref{UL1}--\eqref{VL1} and \eqref{omega1}. 
This completes the proof.
\end{proof}
\begin{lemma}\label{LEMLINFUV}
Let the same assumptions as in Lemma~\ref{LEM1} be satisfied.  The solution  $(u,v)$ of  \eqref{NPBV0}  given by Lemma~\ref{LEM1} for $T<T_{\rm max}$ satisfies
\begin{equation}\label{ULINFTY}
\sup_{t\le T}\norm{u(\cdot,t)}_{L^{\infty}(\Omega)}\le  C(\norm{u_{0}}_{(L^{1}\cap L^{\infty})(\Omega)}, \norm{v_{0}}_{L^{1}(\Omega)}),
\end{equation}
and
\begin{equation}\label{VLINFTY}
\sup_{t\le T}\norm{v(\cdot,t)}_{L^{\infty}(\Omega)}\le  C(\norm{u_{0}}_{(L^{1}\cap L^{\infty})(\Omega)}, \norm{v_{0}}_{(L^{1}\cap L^{\infty})(\Omega)}).
\end{equation}
\end{lemma}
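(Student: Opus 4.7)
The plan is to carry out a Moser-Alikakos iteration that upgrades the uniform-in-$t$ $L^1$ bounds of Lemma~\ref{LEML1} to uniform-in-$t$ $L^\infty$ bounds. I will first treat $u$: its equation couples to $v$ only through $\mathcal{K}[u,v]$, which by Lemmas~\ref{LEMKS} and~\ref{LEML1} is uniformly controlled in $W^{1,\infty}(\Omega)$ on $[0,T_{\max})$. Once $\|u\|_{L^\infty}$ is established, the $v$-equation is handled analogously, with $\mathcal{S}[u,v]$ in place of $\mathcal{K}[u,v]$ and the source $+mu$ treated as bounded forcing.

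For the key $L^p$-estimate on $u$, I test the equation against $u^{p-1}$, $p\ge 2$, and integrate. Writing the first two terms on the right as $u^{p-1}\nabla\cdot(\nabla u - u\mathcal{K})$ allows the boundary contributions of diffusion and adhesion to cancel simultaneously via the Robin condition $\partial_\nu u = u\mathcal{K}\cdot\nu$. Using $\|\nabla\cdot\mathcal{K}\|_{L^\infty}\le C$, the logistic dissipation, and discarding the non-positive cross term $-\frac{\lambda}{k}\int u^p v$, one reaches
\[
\frac{1}{p}\frac{d}{dt}\|u\|_{L^p}^p + \frac{4(p-1)}{p^2}\|\nabla u^{p/2}\|_{L^2}^2 + \frac{\lambda}{k}\int u^{p+1} \le \frac{p-1}{p}\int_{\partial\Omega} u^p\,\mathcal{K}\cdot\nu\,dS + C(p)\|u\|_{L^p}^p.
\]
The boundary integral is handled through the interpolation-trace estimate $\|w\|_{L^2(\partial\Omega)}^2 \le C\|w\|_{L^2(\Omega)}\|w\|_{H^1(\Omega)}$ applied to $w = u^{p/2}$, after which Young's inequality absorbs a small multiple of $\|\nabla u^{p/2}\|_{L^2}^2$ into the left-hand side. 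A second Young's inequality, combined with $\|u\|_{L^p}^{p+1}\le|\Omega|^{1/p}\int u^{p+1}$, absorbs the lower-order $C(p)\|u\|_{L^p}^p$ into $\frac{\lambda}{k}\int u^{p+1}$, yielding a closed differential inequality of the form $y' + c(p)y^{(p+1)/p}\le C(p)$ with $y = \|u\|_{L^p}^p$, hence a $T$-independent $L^p$ bound.

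To iterate up to $L^\infty$, I retain the gradient term and combine it with the Gagliardo-Nirenberg inequality
\[
\|u^{p/2}\|_{L^2}^2 \le C\bigl(\|\nabla u^{p/2}\|_{L^2}^{2\theta}\|u^{p/2}\|_{L^1}^{2(1-\theta)} + \|u^{p/2}\|_{L^1}^2\bigr), \qquad \theta=\frac{n}{n+2},
\]
to cast the differential inequality into Alikakos form, and run the Moser recursion along $p_k = 2^k$ starting from the $L^1$ base case supplied by Lemma~\ref{LEML1}. Standard bookkeeping yields $\sup_{k}\bigl(\sup_{t\le T}\|u\|_{L^{p_k}}\bigr)^{1/p_k}<\infty$, proving \eqref{ULINFTY} in terms of $\|u_0\|_{L^1\cap L^\infty}$ and $\|v_0\|_{L^1}$. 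The same procedure applied to $v$, with $\mathcal{S}$ in place of $\mathcal{K}$ and the now-bounded forcing $mu$, gives \eqref{VLINFTY}. The principal technical obstacle is the sharp tracking of $p$-dependent constants through the Moser recursion together with the treatment of the nonlocal boundary term via trace interpolation; the iteration closes because the logistic $u^{p+1}$ and $v^{p+1}$ terms provide exactly the absorbing structure needed to dominate the drift- and trace-induced $L^p$ losses at every level.
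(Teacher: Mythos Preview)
Your overall strategy---test with $u^{p-1}$, cancel the boundary contribution via the Robin condition, then run a Moser--Alikakos iteration from the $L^1$ bound of Lemma~\ref{LEML1}---matches the paper's. The difference lies in how you treat the drift term $(p-1)\int_\Omega u^{p-1}\nabla u\cdot\mathcal{K}$. You integrate it by parts a second time, which reintroduces the boundary integral $\frac{p-1}{p}\int_{\partial\Omega} u^p\,\mathcal{K}\cdot\nu\,dS$ and forces you to invoke a trace-interpolation inequality; you also rely on $\|\nabla\cdot\mathcal{K}\|_{L^\infty}$ from Lemma~\ref{LEMKS}. The paper avoids all of this: after the first integration by parts (where the Robin condition already kills the boundary term), it simply bounds $(p-1)\int_\Omega u^{p-1}\nabla u\cdot\mathcal{K}\le \frac{p-1}{p^2}\int_\Omega|\nabla u^{p/2}|^2 + C(p-1)\int_\Omega u^p$ by Young's inequality using only $\|\mathcal{K}\|_{L^\infty}\le C$ from \eqref{KLINF}. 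No second integration by parts, no boundary term, no trace estimate. Your intermediate step of absorbing $C(p)\|u\|_{L^p}^p$ into the logistic $\int u^{p+1}$ to obtain a fixed-$p$ bound is likewise unnecessary; the paper discards the logistic term on the left and goes straight to the Alikakos recursion via Gagliardo--Nirenberg. Both routes are correct, but the paper's is shorter and needs less $p$-bookkeeping, while yours demonstrates that the logistic dissipation could serve as an alternative closing mechanism.
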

\begin{proof}
Let $p>1$ and $t\le T<T_{\rm max}$.
Multiplying $\eqref{MODEL00}_{1}$ by $u^{p-1}$, integrating over $\Omega$, and using integrating by parts, we have
\[
\frac{1}{p}\frac{d}{dt}\int_{\Omega}u^{p}+\frac{4(p-1)}{p^{2}}\int_{\Omega}|\nabla u^{\frac{p}{2}}|^{2}+\frac{\lambda}{k}\int_{\Omega} u^{p}(u+v)
=(\lambda-m)\int_{\Omega}u^{p}+(p-1)\int_{\Omega}u^{p-1}\nabla u \,\mathcal{K}[u,v].
\]
Using Young's inequality and \eqref{KLINF}, we can compute the rightmost term as
\[
(p-1)\int_{\Omega}u^{p-1}\nabla u \,\mathcal{K}[u,v]\le \frac{p-1}{p^{2}}\int_{\Omega}|\nabla u^{\frac{p}{2}}|^{2}+C(p-1)\int_{\Omega}u^{p},
\]
and thus, it follows that
\[
\frac{1}{p}\frac{d}{dt}\int_{\Omega}u^{p}+\frac{3(p-1)}{p^{2}}\int_{\Omega}|\nabla u^{\frac{p}{2}}|^{2} 
\le  C_{6} (p+1)\int_{\Omega}u^{p},
\]
where  $C_{6}>0$  is a constant independent of $p$. Then, \eqref{ULINFTY} can be deduced by Moser-Alikakos iteration argument \cite{A79}.  Indeed,  for $p=p_{k}:=2^{k}$, $k=1,2,3,\cdots$, the last inequality becomes
\begin{equation}\label{MA_1}
 \frac{d}{dt}\int_{\Omega}u^{p_k}+\frac{3(p_k-1)}{p_k}\int_{\Omega}|\nabla u^{ p_{k-1}} |^2\le C_{6}p_k(p_k+1)\int_{\Omega}u^{p_k}.
\end{equation}
Using the Gagliardo-Nirenberg interpolation inequality and Young's inequality, we note that
\[
\norm{u^{ p_{k-1}}}_{L^{2}(\Omega)}^{2}\le C\norm{ u^{ p_{k-1}} }_{L^{1}(\Omega)}^{\frac{4}{n+2}}\norm{\nabla u^{ p_{k-1}} }_{L^{2}(\Omega)}^{\frac{2n}{n+2}}+C\norm{ u^{ p_{k-1}} }_{L^{1}(\Omega)}^{2}
\]
\[
\le \frac{1}{2C_{6}p_{k}(p_k+1)}\norm{\nabla u^{ p_{k-1}} }_{L^{2}(\Omega)}^{2}+Cp_{k}^{\frac{n(n+2)}{n+1}}\norm{ u^{ p_{k-1}} }_{L^{1}(\Omega)}^{2}
\]
for some constant $C>0$   independent of $p_{k}$. Plugging it into \eqref{MA_1}, due to   $\frac{3(p_k-1)}{p_k}\ge \frac{3}{2}$, we have
\begin{equation}\label{MA_2}
 \frac{d}{dt}\int_{\Omega}u^{p_k}+2C_{6}p_k(p_k+1)\int_{\Omega}u^{p_k}\le C_{7}p_{k}^{\frac{n(n+2)}{n+1}+2}\bke{  \int_{\Omega} u^{ p_{k-1}}  }^{2},
\end{equation}
where   $C_{7}>0$ is a constant independent of $p_k$. We take  a sufficiently large constant $C_{8}\ge1$  independent of $p_{k}$ satisfying 
$C_{8}p_{k}^{\frac{n(n+2) }{n+1}} \ge   C_{7}p_{k}^{\frac{n(n+2)}{n+1}+2}/(2C_{6}p_k(p_k+1))$ and 
define
\[
M:=\max\{  \norm{u_{0}}_{L^{1}(\Omega)}, \norm{u_{0}}_{L^{\infty}(\Omega)},  1\},\qquad\delta_{k}:=C_{8}p_{k}^{\frac{n(n+2) }{n+1}}.
\]
Then, it follows from \eqref{MA_2} that
\[
 \sup_{t\le T}\int_{\Omega}u^{p_k}(\cdot,t)\le \max\bket{  M^{p_k},\quad   \delta_{k}\biggr{(} \sup_{t\le T}\int_{\Omega}u^{p_{k-1}}(\cdot,t)\biggr{)}^{2}}.
\]
 Note that   $\delta_k\ge1$ for all $k=1,2,3,\cdots$. 
By an inductive computation, we have
 \[
  \sup_{t\le T}\biggr{(}\int_{\Omega}u^{p_k}(\cdot,t)\biggr{)}^{\frac{1}{p_{k}}}\le \bigr{[}\delta_{k}\delta_{k-1}^{p_{1}}\delta_{k-2}^{p_{2}}\cdots\delta_{1}^{p_{k-1}}  (1+\sup_{t\le T}\norm{u(\cdot,t)}_{L^{1}(\Omega)})^{p_{k}}  \bigr{]}^{\frac{1}{p_{k}}}M 
 \]
 \begin{equation}\label{MA_3}
 \le   C_{8}^{\sum_{i=1}^{k}\frac{1}{2^{i}}}2^{\frac{n(n+2)}{n+1}\sum_{i=1}^{k}\frac{i}{2^{i}}}(1+\sup_{t\le T}\norm{u(\cdot,t)}_{L^{1}(\Omega)}) M.
\end{equation}
Thus, by \eqref{UL1} and $\sum_{i=1}^{k}\frac{i}{2^{i}}<\infty$, taking the limit $k\rightarrow\infty$, we can obtain  \eqref{ULINFTY}.

Similarly, we can see from $\eqref{MODEL00}_{2}$  that
\[
\begin{aligned}
\frac{1}{p}\frac{d}{dt}\int_{\Omega}v^{p}&+\frac{4(p-1)}{p^{2}}\int_{\Omega}|\nabla v^{\frac{p}{2}}|^{2}+\frac{\mu}{k}\int_{\Omega} v^{p}(u+v)\\
&=m\int_{\Omega}uv^{p-1}+\mu\int_{\Omega}v^{p}+(p-1)\int_{\Omega}v^{p-1}\nabla v \,\mathcal{S}[u,v].
\end{aligned}
\]
Using Young's inequality and \eqref{SLINF}, we note that
\[
(p-1)\int_{\Omega}v^{p-1}\nabla v \,\mathcal{S}[u,v]\le \frac{p-1}{p^{2}}\int_{\Omega}|\nabla v^{\frac{p}{2}}|^{2}+C(p-1)\int_{\Omega}v^{p}.
\]
Analogously as above, using \eqref{ULINFTY}, we have
\begin{align}\label{MA_33}
\frac{1}{p}\frac{d}{dt}\int_{\Omega}v^{p}+\frac{3(p-1)}{p^{2}}\int_{\Omega}|\nabla v^{\frac{p}{2}}|^{2}\le C(p+1)\int_{\Omega}v^{p}+C,
\end{align}
where $C>0$ is a constant independent of $p$.  Now,  likewise \eqref{MA_2}, we can find positive constants $C_{9}$, $C_{10}$, and $C_{11}$ independent of $p=p_{k}=2^{k}$, $k=1,2,3,\cdots$ such that
\begin{equation}\label{MA_4}
 \frac{d}{dt}\bke{\int_{\Omega}v^{p_k}+C_{9}}+2C_{10}p_k(p_k+1)\bke{\int_{\Omega}v^{p_k}+C_{9}}\le C_{11}p_{k}^{\frac{n(n+2)}{n+1}+2}\bke{  \int_{\Omega} v^{ p_{k-1}} +C_{9} }^{2}.
\end{equation}
We take  a sufficiently large constant $C_{12}\ge1$  independent of $p_{k}$ satisfying 
\[C_{12}p_{k}^{\frac{n(n+2) }{n+1}} \ge   C_{11}p_{k}^{\frac{n(n+2)}{n+1}+2}/(2C_{10}p_k(p_k+1))\] and 
define
\[
M:=\max\{  \norm{v_{0}}_{L^{1}(\Omega)}, \norm{v_{0}}_{L^{\infty}(\Omega)}, C_{9}+1 \},\qquad\delta_{k}:=C_{12}p_{k}^{\frac{n(n+2) }{n+1}}.
\]Then, we obtain from \eqref{MA_4} that
\[
 \sup_{t\le T}\bke{\int_{\Omega}v^{p_k}(\cdot,t)+C_{9}}\le \max\bket{  (2M)^{p_k},\quad   \delta_{k}\sup_{t\le T}\biggr{(} \int_{\Omega}v^{p_{k-1}}(\cdot,t)+C_{9}\biggr{)}^{2}}.
\]
 Note that   $\delta_k\ge1$ for all $k=1,2,3,\cdots$. 
By an inductive computation, we have
 \[
  \sup_{t\le T}\biggr{(}\int_{\Omega}v^{p_k}(\cdot,t)+C_{9}\biggr{)}^{\frac{1}{p_{k}}}
  \]
  \[
  \le \bigr{[}\delta_{k}\delta_{k-1}^{p_{1}}\delta_{k-2}^{p_{2}}\cdots\delta_{1}^{p_{k-1}}  (\sup_{t\le T}\norm{v(\cdot,t)}_{L^{1}(\Omega)}+C_{9}+1)^{p_{k}}  \bigr{]}^{\frac{1}{p_{k}}} 2M 
 \]
\[
 \le   C_{12}^{\sum_{i=1}^{k}\frac{1}{2^{i}}}2^{\frac{n(n+2)}{n+1}\sum_{i=1}^{k}\frac{i}{2^{i}}}(\sup_{t\le T}\norm{v(\cdot,t)}_{L^{1}(\Omega)}+C_{9}+1) 2M. 
\]
Analogously to \eqref{MA_3},  we can see that
 \[
 \sup_{t\le T}\biggr{(}\int_{\Omega}v^{p_k}(\cdot,t)\biggr{)}^{\frac{1}{p_{k}}} \le \sup_{t\le T}\biggr{(}\int_{\Omega}v^{p_k}(\cdot,t)+C_{9}\biggr{)}^{\frac{1}{p_{k}}} 
\]
\[
 \le   C_{12}^{\sum_{i=1}^{k}\frac{1}{2^{i}}}2^{\frac{n(n+2)}{n+1}\sum_{i=1}^{k}\frac{i}{2^{i}}}(\sup_{t\le T}\norm{v(\cdot,t)}_{L^{1}(\Omega)}+C_{9}+1) 2M.
 \]
Due to \eqref{VL1} and $\sum_{i=1}^{k}\frac{i}{2^{i}}<\infty$, we have  \eqref{VLINFTY} by  taking the limit $k\rightarrow\infty$.  This completes the proof.
 
\end{proof}
\end{subsection}
\begin{pfthm1}
It is a direct consequence of local-in-time existence, uniqueness, non-negativity (Lemma~\ref{LEM1}), blow-up criterion (Lemma~\ref{BUCLEM}), and {\it a priori} estimates (Lemma~\ref{LEMLINFUV}). This completes the proof.
\end{pfthm1}

\section{Independent case}\label{SEC3}
In this section, we prove Theorem~\ref{GETHM1}.

 \begin{subsection}{Preliminary}
In what it follows we take such an example of $\Om$ and $E(x)$ and show a similar lemma as Lemma \ref{LEMKS} holds for ${\calK}$ and ${\calS}$.
\begin{lemma}\label{LEMKS1}
Let $\Omega$ be the open ball of radius $L$, $B_{L}(0)$, and let $0<R<L$. Suppose that $f,g \in W^{1,p}(\Omega)$, $p>n$. Assume further that $\mathcal{K}[f,g]$, $\mathcal{S}[f,g]$, $E$, and $\omega$ are given by \eqref{MODEL01}, and \eqref{sensing2}--\eqref{omega2}.
Then, there exists a  constant $C>0$ satisfying
\[ \norm{\mathcal{K}[f,g]}_{\iom}+\norm{\mathcal{S}[f,g]}_{\iom}\le C(R,\Omega)(\norm{f}_{L^1(\Omega)}+ \norm{g}_{L^1(\Omega)}),\]
\[\norm{\mathcal{K}[f,g]}_{W^{1,\infty}(\Omega)}+\norm{\mathcal{S}[f,g]}_{W^{1, \infty}(\Omega)}\le C(R,\Omega)(\norm{f}_{W^{1,p}(\Omega)}+ \norm{g}_{W^{1,p}(\Omega)}). 
\]
\end{lemma}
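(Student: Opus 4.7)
The plan is to reduce both estimates to the scalar functional
\[
\calI[h](x):=\int_{E(x)}h(x+y)\om(y)\,dy,
\]
since $\calK[f,g]$ and $\calS[f,g]$ are linear combinations $M_{11}\calI[f]+M_{12}\calI[g]$ (resp.\ with $M_{21},M_{22}$). The weight $\om(y)=\frac{y}{|y|}w(|y|)$ is pointwise bounded by $\|w\|_{L^\infty}$, and the sensing domain admits the uniform representation $E(x)=B_{\rho(x)}(0)$ with $\rho(x):=\min(R,L-|x|)$, which is $1$-Lipschitz in $x$ and satisfies $x+E(x)\subset\Om$ by construction. These three observations drive the whole argument.

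For the $L^\infty$ inequality the estimate is immediate: bounding $|\om|\le\|w\|_{L^\infty}$ and changing variables $z=x+y$ yields
\[
|\calI[h](x)|\le\|w\|_{L^\infty}\int_{x+E(x)}|h(z)|\,dz\le C(R,\Om)\,\|h\|_{L^1(\Om)}.
\]
For the $W^{1,\infty}$ inequality the goal is Lipschitz continuity of $\calI[h]$. Passing to polar coordinates $y=r\theta$ decouples the $x$-dependence into the integration limit and the translation argument:
\[
\calI[h](x)=\int_{S^{n-1}}\int_0^{\rho(x)}h(x+r\theta)\,\theta\,w(r)\,r^{n-1}\,dr\,d\theta=:G(\rho(x),x).
\]
For $x_1,x_2\in\Om$, I would assume without loss of generality that $|x_1|\le|x_2|$, so that $\rho(x_2)\le\rho(x_1)$, and decompose
\[
\calI[h](x_1)-\calI[h](x_2)=\bigl[G(\rho(x_1),x_1)-G(\rho(x_2),x_1)\bigr]+\bigl[G(\rho(x_2),x_1)-G(\rho(x_2),x_2)\bigr].
\]

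The first bracket is an integral over the thin spherical shell $\rho(x_2)<r<\rho(x_1)$, so it is bounded by $C\|h\|_{L^\infty(\Om)}|\rho(x_1)-\rho(x_2)|\le C\|h\|_{L^\infty(\Om)}|x_1-x_2|$; since $p>n$, the Morrey embedding $W^{1,p}(\Om)\hookrightarrow L^\infty(\Om)$ replaces $\|h\|_{L^\infty}$ by $C\|h\|_{W^{1,p}}$. For the second bracket, applying the fundamental theorem of calculus on the segment from $x_2$ to $x_1$ gives
\[
h(x_1+y)-h(x_2+y)=\int_0^1\na h((1-t)x_2+tx_1+y)\cdot(x_1-x_2)\,dt,
\]
and translating the $y$-integral to a ball centered at the intermediate point and applying H\"older produces the bound $C|x_1-x_2|\,\|\na h\|_{L^p(\Om)}$. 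Adding the two contributions delivers the Lipschitz estimate, which together with the $L^\infty$ bound yields the second claimed inequality.

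The main technical point is justifying the FTC step in the second bracket: since $\na h$ is defined only on $\Om$, the translated ball $B_{\rho(x_2)}((1-t)x_2+tx_1)$ must lie inside $\Om=B_L(0)$ for every $t\in[0,1]$. This is exactly why the ordering $|x_1|\le|x_2|$ is imposed at the outset, because then
\[
|(1-t)x_2+tx_1|+\rho(x_2)\le|x_2|+\rho(x_2)\le|x_2|+(L-|x_2|)=L,
\]
so the ball remains in $\Om$ and H\"older on $\|\na h\|_{L^p(\Om)}$ is legitimate. This careful choice of orientation, together with the $1$-Lipschitz continuity of $\rho$ and the Morrey embedding, constitutes the whole mechanism of the proof.
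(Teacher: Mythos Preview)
Your argument is correct and genuinely differs from the paper's. The paper computes the pointwise derivative of $\calI[f]$ explicitly in two regions (the interior $|x|<L-R$ and the annulus $L-R<|x|<L$), obtaining a Leibniz-type boundary term in the annulus, bounds each piece in $L^\infty$, and then runs an integration-by-parts argument across the interface $|x|=L-R$ to confirm that the piecewise formula is indeed the weak derivative of $\calI[f]$; moreover it carries this out only in dimension two for notational simplicity. Your approach instead proves Lipschitz continuity directly from difference quotients, splitting $\calI[h](x_1)-\calI[h](x_2)$ into a thin-shell term coming from the change of radius $\rho$ and a translation term coming from the shift of the center. The shell term is handled by the $1$-Lipschitz property of $\rho$ and Morrey's embedding, while the translation term uses the fundamental theorem of calculus (valid by density of smooth functions in $W^{1,p}$, which you should mention explicitly) together with the geometric inclusion $B_{\rho(x_2)}((1-t)x_2+tx_1)\subset\overline{\Om}$ that your ordering $|x_1|\le|x_2|$ guarantees. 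Your route is dimension-independent and avoids the weak-derivative verification entirely, which makes it somewhat cleaner; the paper's route has the advantage of producing an explicit formula for $\nabla\calI[f]$, though that formula is not used later in the paper.
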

\begin{proof}
It is sufficient to show 
\begin{align}
\label{lemks1eq}
\| \mathcal{I}[f]\|_{L^{\infty}(\Omega)} &\le C(R,\Om)\| f\|_{L^1(\Omega)}, \\
\label{lemks1eq0}
\| \mathcal{I}[f]\|_{W^{1,\infty}(\Omega)} 
&\le C(R,\Om)\| f\|_{W^{1,p}(\Omega)}, 
\end{align} where
\[
\mathcal{I}[f](x)=\displaystyle\int_{E(x)} f(x+\br)      \omega(\br)d\br.
\]
\eqref{lemks1eq} is obvious. 
We prove \eqref{lemks1eq0} when $B_L(0) $ is the two dimensional disc in  $\bbr^2$ for a computational simplicity.  
Let us  denote the radial coordinate of $B_L(0)$ by $(s, \vp)$, thus consider $ x=(x_1, x_2)= (s\cos \vp, s\sin \vp)$. We may assume $f$ is in $C^1(B_L(0))$  since $ f\in W^{1.p}(\Om)$ has an approximating sequences from $C^{\infty}(\Om)$ in $W^{1. p}(\Om)$.\\
\indent
When $ |x| <L-R$, we have
\begin{equation}\label{lemks1eq1}
\pa_{x_1} \calI[f](x) = \int_{E(x)} f_{x_1}(x+\br) \om(\br) d\br:= I^{in}(x).
\end{equation}
When $ L-R<|x| <L$, we use  polar coordinates $(r, \theta)$ on  $ B_{L-s}(0)$ to write
\begin{align*}
\calI[f](x)= \int_0^{L-s} \int_0^{2\pi} f(x+\br) \om(\br) r d\theta dr.
\end{align*}
In this region we have
\begin{equation}\label{lemkseq2}
\begin{aligned} 
\pa_{x_1} \calI[f](x)=& \frac{\pa s}{\pa x_1}  \dds \int_0^{L-s} \int_0^{2\pi} f(x+\br) \om(\br) r d\theta dr +
 \frac{\pa \vp}{\pa x_1}\ddp \int_0^{L-s} \int_0^{2\pi} f(x+\br) \om(\br) r d\theta dr\\
= &-  \frac{\pa s}{\pa x_1}\int_0^{2\pi} f(x + (L-s)(\cos \theta, \sin \theta) )\om ((L-s)(\cos\theta, \sin\theta)) d\theta \\
&+ \int_{E(x)} f_{x_1}(x+\br)\om(\br) d\br\\
 :=& I^o(x)
\end{aligned}
\end{equation}
with $ \frac{\pa s}{\pa x_1}= \cos \vp$.
From the above we have
\begin{align*}
\| \pa_{x_1} \calI[f]\|_{L^{\infty}(\Om)} \le  C\|f\|_{L^{\infty}(\Om)} \| \om\|_{L^{\infty}(\Om)} 
+C \|\om \|_{L^{p'}(\Om)} \| f_{x_1}\|_{L^p(\Om)} 
\end{align*} for $\frac{1}{ p'} + \frac{1}{p} =1$.
The same bound holds for $\| \pa_{x_2} \calI[f]\|_{L^{\infty}(\Om)}$.
 Hence we prove 
\begin{equation}\label{lemks1eq3}
\| \pa_{x} \calI[f]\|_{L^{\infty}(\Om)} \le C(R, \Om) \|f\|_{W^{1,p}(\Om)},
\end{equation}
where $\pa_x\calI[f]$ denotes the pointwise differentiation as above.\\
\indent 
For $\psi \in  C_0^{\infty}(\Om)$ we have
\begin{align*}
\int_{\Om} \calI[f](x) \pa_{x_1} \psi(x) dx 
&=\lim_{\ep\to 0}\int_{|x|< L-R-\ep}  \calI[f](x) \pa_{x_1} \psi(x) dx\\
&\quad + 
 \lim_{\ep\to 0}\int_{L-R + \ep <|x|< L}  \calI[f](x) \pa_{x_1} \psi(x) dx\\ 
& = -\int_{|x|< L-R }I^{in}(x)\psi(x) dx 
- \int_{L-R <|x|< L } I^o(x)\psi(x) dx\\
&\quad + \lim_{\ep\to 0} \int_{|x|= L-R-\ep} \calI[f](x) \psi(x) \frac{x_1}{L-R-\ep} dx\\
&\quad -\lim_{\ep\to 0} \int_{|x|= L-R+\ep} \calI[f](x) \psi(x) \frac{x_1}{L-R+\ep} dx\\
& =
 -\int_{|x|< L-R }I^{in}(x)\psi(x) dx 
- \int_{L-R <|x|< L } I^o(x)\psi(x) dx 
\end{align*}
since $ \calI[f]$ is continuous on $ |x|= L-R$. Hence
\[ D_{x_1} \calI[f](x): = \begin{cases} 
I^{in}(x)  \quad  |x| < L-R\\
I^o(x)  \quad   L-R< |x| < L
\end{cases}\]
 is the weak derivative of $\calI[f]$ in ${x_1}$  and \eqref{lemks1eq0} follows due to \eqref{lemks1eq3}. This completes the proof.
\end{proof}

\end{subsection}
\begin{subsection}{Local well-posedness, blow-up criteria, and a priori estimates}
To prove  local well-posedness of \eqref{MODEL11}--\eqref{omega2}, we introduce the sectorial operator $A:=(I-\Delta)|_{D(A)}$ with its domain
\[
D(A):=\{  f\in W^{2,p}(\Omega)\,|\,Bf:=\partial_{\nu}f=0\,\mbox{ on }\,\partial\Omega   \},\quad 1<p<\infty.
\]
Since $(A,B)$ is a sectorial operator, it generates an analytic semigroup $\{ e^{-tA}\,|\,t\ge0  \}$ on $L^{p}(\Omega)$. 
Note   that the fractional powers of $A$ are well-defined (see \cite[Section 1.4]{H81}). We denote the domains of fractional powers by
\[
X^{\eta}_{p}:=D(A^{\eta}),\quad  \eta\in(0,1).
\]
We also note from  \cite[Theorem 1.6.1]{H81} that 
\begin{equation}\label{FRACEM}
X^{\eta}_{p}\hookrightarrow W^{\kappa,q}(\Omega)\,\mbox{ for }\,\kappa-\frac{n}{q}<2\eta-\frac{n}{p}, \,q\ge p.
\end{equation}
Now we consider the homogenous Neumann boundary value problems for   \eqref{MODEL11}--\eqref{omega2}: 
\begin{equation}\label{NPBV1}
\begin{aligned}
&\partial_{t}u+(I-\Delta)u=g_{1},&  &\partial_{t}v+(I-\Delta)v=g_{2},   \quad\qquad&x\in\Omega, \,\,t\le T,\,\,\,&\\
&\partial_{\nu}u=  0, &\qquad\quad  &\partial_{\nu} v=0,  &x\in\partial\Omega, \,\,t\le T,&\\
&u(x,0) =u_{0}(x),&\qquad  &v(x,0) =v_{0}(x),  &x\in\partial\Omega,\quad\qquad\,&
\end{aligned}
\end{equation}
and their integral representation formulas: 
 \begin{equation}\label{INTF11}
u=e^{-tA }u_{0}+\int_{0}^{t}e^{-(t-\tau)A } g_{1}(\tau) d\tau,\qquad v=e^{-tA }v_{0}+\int_{0}^{t}e^{-(t-\tau)A} g_{2}(\tau) d\tau,
\end{equation}
where   
 \begin{equation}\label{INTF12}
  \begin{array}{ll}
g_{1}:=-\nabla \cdot (u \,\mathcal{K}[u,v]     )+(1-m)u +\displaystyle\frac{\lambda}{k} u(k-(u+v)),
\\
g_{2}:=-\nabla \cdot (v \,\mathcal{S}[u,v]     )+mu+v +\displaystyle\frac{\mu}{k} v(k-(u+v)).
  \end{array}
\end{equation}
Before stating the local-in-time result, let us note the smoothing estimates   for $e^{-t A} $ from  \cite[Theorem 1.4.3]{H81}, which will be used in the proof of the next lemma:\\

If $p>1$, $0<\eta<1$, $f\in L^{p}(\Omega)$, then there exist positive constants   $\kappa$, and $C=C(\eta)$  such that
\begin{equation}\label{NMNEMB}
\norm{ e^{-tA}f}_{X^{\eta}_{p}}\le Ce^{-\kappa t}t^{-\eta}\norm{f}_{L^{p}(\Omega)},\quad t>0.
\end{equation}

\begin{lemma}\label{LEM4}
Let $\Omega \subset\R^{n}$, $n\ge2$,  be an open ball of radius $L$, $B_{L}(0)$. Assume that   $u_{0}$ and $v_{0}$  are  non-negative functions belong to $W^{2,p}(\Omega)$, $p>n$, and satisfy compatibility condition
\[
\partial_{\nu}u_0=0,\quad \partial_{\nu}v_0=0,\qquad x\in\partial\Omega.
\]  
Then, there is a maximal time, $T_{\rm max}\le\infty$, such that a pair of unique non-negative strong solution $(u,v)$ of \eqref{NPBV1} exists  and satisfies
\[
u,v\in C([0,T_{\rm max}); W^{1,p}(\Omega))\cap W^{1,p}(0,t; L^{p}(\Omega))\cap L^{p}(0,t;W^{2,p}(\Omega)),\qquad t<T_{\rm max}. 
\]
Moreover, if $T_{\rm max}<\infty$, then
\begin{equation}\label{BUC1}
\lim_{t\rightarrow T_{\rm max}} (\norm{u(\cdot,t)}_{W^{1,p}(\Omega)}+\norm{v(\cdot,t)}_{W^{1,p}(\Omega)} )=\infty.
\end{equation}
\end{lemma}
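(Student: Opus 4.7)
The plan is to mirror the three-step scheme of Lemma~\ref{LEM1}, but since the boundary condition is now homogeneous Neumann for $u$ and $v$, the construction is cleaner and can be carried out directly with the analytic semigroup $e^{-tA}$ on $L^p(\Omega)$, using the representation \eqref{INTF11}--\eqref{INTF12} instead of Amann's generalized variation-of-constants formula. Fix $p>n$ and choose $\eta\in(\tfrac12,1)$ so small that $2\eta-\tfrac{n}{p}>1-\tfrac{n}{p}$, which via \eqref{FRACEM} gives the embedding $X^\eta_p\hookrightarrow W^{1,p}(\Omega)\hookrightarrow L^\infty(\Omega)$. Combining this with \eqref{NMNEMB} yields the working estimate
\[
\bigl\|e^{-tA}f\bigr\|_{W^{1,p}(\Omega)}\le Ce^{-\kappa t}t^{-\eta}\|f\|_{L^p(\Omega)},\qquad t>0.
\]

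\textbf{Step 1 (local generalized solution).} Work in $X_T:=C([0,T];W^{1,p}(\Omega))$ with the closed ball $S_T:=\{f\in X_T:\|f\|_{X_T}\le R_0\}$ for $R_0$ slightly larger than $\|u_0\|_{W^{1,p}}+\|v_0\|_{W^{1,p}}$. Define $\Phi_1,\Phi_2$ by \eqref{INTF11}. Using Lemma~\ref{LEMKS1} and the $W^{1,p}\hookrightarrow L^\infty$ embedding, control the nonlinear source by
\[
\|g_1(\tau)\|_{L^p(\Omega)}\le C\bigl(\|u\|_{W^{1,p}}\|\mathcal{K}[u,v]\|_{W^{1,\infty}}+\|u\|_{L^\infty}+\|u\|_{L^\infty}(\|u\|_{L^\infty}+\|v\|_{L^\infty})\bigr)\le C(R_0+R_0^2),
\]
and similarly for $g_2$. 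The above semigroup bound then gives
\[
\|\Phi_i(u,v)(t)\|_{W^{1,p}(\Omega)}\le C\|u_0\|_{W^{1,p}(\Omega)}+C(R_0+R_0^2)t^{1-\eta}.
\]
Choosing $T$ small enough in terms of $R_0$ alone makes $\Phi=(\Phi_1,\Phi_2)$ map $S_T\times S_T$ into itself; an analogous bound for $\Phi(u,v)-\Phi(\tilde u,\tilde v)$, using Lipschitz dependence of $\mathcal{K},\mathcal{S}$ on $(u,v)$ in $W^{1,p}$ from Lemma~\ref{LEMKS1}, yields a contraction after possibly shrinking $T$. Banach fixed-point plus a standard extension argument produces a maximal existence time $T_{\rm M}\le\infty$ with the blow-up alternative \eqref{BUC1}, because the length of the existence interval depends only on $\|u_0\|_{W^{1,p}}+\|v_0\|_{W^{1,p}}$.

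\textbf{Step 2 (strong solution and maximal regularity).} Once $(u,v)\in C([0,T];W^{1,p}(\Omega))$ with $T<T_{\rm M}$ is in hand, Lemma~\ref{LEMKS1} and $W^{1,p}\hookrightarrow L^\infty$ give $g_1,g_2\in L^\infty(0,T;L^p(\Omega))\subset L^p(0,T;L^p(\Omega))$. Apply the Denk--Hieber--Pr\"uss maximal regularity theorem \cite{DHP07} to the linear homogeneous Neumann problem \eqref{NPBV1} with these right-hand sides and data $u_0,v_0\in W^{2,p}(\Omega)$ satisfying $\partial_\nu u_0=\partial_\nu v_0=0$; this yields a unique strong solution $(f_1,f_2)$ in $W^{1,p}(0,T;L^p)\cap L^p(0,T;W^{2,p})$. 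By representing $(f_1,f_2)$ through \eqref{INTF11} and using uniqueness of the fixed-point construction, $(f_1,f_2)=(u,v)$, giving the claimed regularity.

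\textbf{Step 3 (non-negativity) and blow-up criterion.} Test the equations against $u_-:=-\min\{u,0\}$ and $v_-:=-\min\{v,0\}$: using integration by parts (valid since the normal trace of $u\mathcal{K}$ vanishes because $\mathcal{K}[u,v]=0$ on $\partial\Omega$, which is the whole point of choosing \eqref{sensing2}), Young's inequality, the boundedness $\|\mathcal{K}[u,v]\|_{L^\infty},\|\mathcal{S}[u,v]\|_{L^\infty}\le C$ on bounded time intervals, and the sign $-m\int u v_-\le 0$, deduce
\[
\tfrac{d}{dt}\int_\Omega u_-^2\le C\int_\Omega u_-^2,\qquad \tfrac{d}{dt}\int_\Omega v_-^2\le C\!\int_\Omega v_-^2,
\]
so Gr\"onwall's lemma and $u_-(\cdot,0)=v_-(\cdot,0)=0$ give $u,v\ge 0$. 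The blow-up alternative \eqref{BUC1} then follows as in the proof of Lemma~\ref{LEM1} because the existence time in Step~1 depends only on $\|u_0\|_{W^{1,p}}+\|v_0\|_{W^{1,p}}$.

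The only step requiring genuine care is Step~1: one must verify that although $\mathcal{K}[u,v]$ and $\mathcal{S}[u,v]$ are only Lipschitz in $x$ (cf.\ Lemma~\ref{LEMKS1}), the divergence structure $-\nabla\cdot(u\,\mathcal{K}[u,v])$ still lands in $L^p(\Omega)$ because the $W^{1,\infty}$ bound from Lemma~\ref{LEMKS1} is exactly what is needed. Everything else is routine once the correct value of $\eta\in(\tfrac12,1)$ is selected so that $X^\eta_p\hookrightarrow W^{1,p}(\Omega)$ and $t^{-\eta}$ remains integrable near $0$.
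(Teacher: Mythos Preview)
Your proposal is correct and follows essentially the same route as the paper: fixed-point argument in $C([0,T];W^{1,p})$ via the analytic semigroup $e^{-tA}$ and the smoothing estimate \eqref{NMNEMB} with $\eta\in(\tfrac12,1)$, control of $g_1,g_2$ in $L^p$ through Lemma~\ref{LEMKS1}, then maximal regularity and the $u_-,v_-$ test for non-negativity. The only cosmetic difference is that for the maximal regularity step the paper invokes Ladyzhenskaya--Solonnikov--Ural'ceva \cite{LSU88} rather than \cite{DHP07}, and it simply refers back to \eqref{UNU}--\eqref{UNV} for non-negativity without spelling out (as you do) that the boundary term vanishes because $\mathcal{K}[u,v]=\mathcal{S}[u,v]=0$ on $\partial\Omega$.
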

\begin{proof}
Let $n<p<\infty$. With  positive constants $T<1$ and $R_0$ to be specified below, we introduce the Banach space $X_{T}:=C([0,T];W^{1,p}(\Omega))$ and its closed convex subset $S_{T}\subset X_{T}$,
\[
S_{T}:=\bigr{\{}\,f\in X_{T}\,|\, \norm{f}_{L^{\infty}(0,T; W^{1,p}(\Omega))}\le R_0   \bigr{\}}.
\]
Let $u,v \in S_{T}$.  As in \eqref{INTF11}--\eqref{INTF12}, we consider \begin{equation}\label{PFPHI11}
\Phi_{1}(u,v):=e^{-tA }u_{0}+\int_{0}^{t}e^{-(t-\tau)A }g_{1}(\tau) d\tau,
\end{equation}
\begin{equation}\label{PFPHI22}
\Phi_{2}(u,v):=e^{-tA }v_{0}+\int_{0}^{t}e^{-(t-\tau)A } g_{2}(\tau) d\tau,
\end{equation}
where $g_{i}$ for $i=1,2$ are functions given in \eqref{INTF12}.

First, we show  $\Phi_{1}(u,v),\Phi_{2}(u,v)\in S_{T}$.    Let $\frac{1}{2}<\eta<1$ and note from \eqref{FRACEM} that
\begin{equation}\label{FRACEM1}
X^{\eta}_{p}\hookrightarrow W^{1,p}(\Omega).
\end{equation}
Using \eqref{NMNEMB} and \eqref{FRACEM1},
we can compute
\[
\begin{aligned}
\norm{\Phi_{1}(u,v)}_{W^{1,p}(\Omega)}&\le \norm{e^{-tA}u_{0}}_{W^{1,p}(\Omega)}+\int_{0}^{t}\|e^{-(t-\tau)A}g_{1}(\tau)\|_{W^{1,p}(\Omega)}d\tau\\
&
\le K_{3}\norm{u_{0}}_{W^{1,p}(\Omega)}+C\int_{0}^{t}\|e^{-(t-\tau)A}g_{1}(\tau)\|_{X^{\eta}_{p}}d\tau\\
&
\le  K_{3}\norm{u_{0}}_{W^{1,p}(\Omega)}+C\int_{0}^{t}e^{-\kappa(t-\tau)}(t-\tau)^{-\eta}\norm{g_{1}(\tau)}_{L^{p}(\Omega)} d\tau.
\end{aligned}
\]
By Lemma~\ref{LEMKS1} and $W^{1,p}(\Omega)\hookrightarrow L^{\infty}(\Omega)$, we have  
\[
\norm{g_{1}(\tau)}_{L^{p}(\Omega)}=\bigr{\|}-\nabla \cdot (u \,\mathcal{K}[u,v]     )+(1-m)u +\displaystyle\frac{\lambda}{k} u(k-(u+v))\bigr{\|}_{L^{p}(\Omega)}\le   C (R_0+R_0^{2}),
\]
and thus,
\[
\norm{\Phi_{1}(u,v)}_{W^{1,p}(\Omega)}\le K_{3}\norm{u_{0}}_{W^{1,p}(\Omega)}+ C_{13}(R_0+R_0^{2})t^{1-\eta},
\]
where $C_{13}$ is a positive constant independent of $R_0$.
Similarly, we  also have 
\[
\norm{\Phi_{2}(u,v)}_{W^{1,p}(\Omega)}\le K_{4}\norm{v_{0}}_{W^{1,p}(\Omega)}+ C_{14}(R_0+R_0^{2})t^{1-\eta},
\]
where  $C_{14}$ is a positive constant independent of $R_0$. 
Taking 
\[
R_0:=K_{3}\norm{u_{0}}_{W^{1,p}(\Omega)}+K_{4}\norm{v_{0}}_{W^{1,p}(\Omega)}+1,
\]
 and 
\[
T<T_{3}:=\min \bket{ 1, \frac{1}{(2 C_{13}(R_0+R_0^{2}))^{\frac{1}{ 1-\eta}}}, \frac{1}{(2 C_{14} (R_0+R_0^{2}))^{\frac{1}{1-\eta}}} },
\]
we obtain 
$
\Phi_{1}(u,v),\Phi_{2}(u,v)\in S_{T}$.

We next show the mapping $(u,v)\mapsto (\Phi_{1},\Phi_{2})$ is a contraction. Using \eqref{INTF11}, we note that
\[
\Phi_{1}(u,v)-\Phi_{1}(\tilde{u},\tilde{v})=\int_{0}^{t}e^{-(t-\tau)A}(g_{1}-\tilde{g}_{1})(\tau)d\tau,
\]
\[
\Phi_{2}(u,v)-\Phi_{2}(\tilde{u},\tilde{v})=\int_{0}^{t}e^{-(t-\tau)A}(g_{2}-\tilde{g}_{2})(\tau) d\tau,
\]
where 
\[
\tilde{g}_{1}=-\nabla \cdot (\tilde{u} \,\mathcal{K}[\tilde{u},\tilde{v}]     )+(1-m)\tilde{u} +\displaystyle\frac{\lambda}{k} \tilde{u}(k-(\tilde{u}+\tilde{v})), 
\]
\[
\tilde{g}_{2}=-\nabla \cdot (\tilde{v} \,\mathcal{S}[\tilde{u},\tilde{v}]     )+ m\tilde{u} +\tilde{v}+\displaystyle\frac{\mu}{k} \tilde{v}(k-(\tilde{u}+\tilde{v})).
\]
Then, analogously to the above  computations, we can estimate
\[
\norm{(\Phi_{1},\Phi_{2})(u,v)-(\Phi_{1},\Phi_{2})(\tilde{u},\tilde{v})}_{W^{1,p}(\Omega)}\le  C_{15} (R_0+1)T^{1-\eta}\norm{ (u,v)- (\tilde{u},\tilde{v})}_{W^{1,p}(\Omega)}.
\]
Taking 
\[
T<T_{4}:=\min \bket{T_{3},  \frac{1}{(2 C_{15} (R_0+1))^{\frac{1}{ 1-\eta}}} } ,
\] we obtain 
\[
\norm{(\Phi_{1},\Phi_{2})(u,v)-(\Phi_{1},\Phi_{2})(\tilde{u},\tilde{v})}_{W^{1,p}(\Omega)}\le \frac{1}{2}\norm{ (u,v)- (\tilde{u},\tilde{v})}_{W^{1,p}(\Omega)},
\]
i.e., the mapping is a contraction. According to the Banach fixed point theorem, this mapping has a fixed point in $S_{T}$, denoted again as $(u,v)$.

  The uniqueness of $(u,v)$, and the blow-up criterion \eqref{BUC1} can be obtained as in the proof of Lemma~\ref{LEM1}. By  the maximal regularity theorem   (see, e.g., Ladyzhenskaya-Solonnikov-Ural\'ceva \cite[Section 4. Theorem 9.1]{LSU88}), we have
\[
u,v\in L^{p}(0,T; W^{2,p}(\Omega))\cap W^{1,p}(0,T; L^{p}(\Omega)).
\]   
Then, the non-negativity of solutions is followed as \eqref{UNU}--\eqref{UNV}. This completes the proof.
\end{proof}
 
Next, we  provide a refined blow-up criterion (Lemma~\ref{BUCLEM1}), and  a priori estimates (Lemma~\ref{LEML11} and Lemma~\ref{LEMLINFUV1}). In order to reduce the redundancies, we refer to see the proofs of Lemma~\ref{BUCLEM},  Lemma~\ref{LEML1}, and Lemma~\ref{LEMLINFUV}. 

\begin{lemma}\label{BUCLEM1}
Let  the same assumptions as in Lemma~\ref{LEM4} be satisfied. The solution $(u,v)$ of \eqref{NPBV1} given by Lemma~\ref{LEM4} satisfies
\[
\mbox{either }\, T_{\rm max}=\infty,\,\mbox{ or }\, \lim_{t\rightarrow T_{\rm max}}(\norm{u(\cdot,t)}_{L^{\infty}(\Omega)}+\norm{v(\cdot,t)}_{L^{\infty}(\Omega)})=\infty.
\]
\end{lemma}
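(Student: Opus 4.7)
The plan is to follow the strategy used in Lemma~\ref{BUCLEM}, but now based on the Neumann semigroup representation \eqref{INTF11} rather than the generalized variation-of-constants formula. Assuming $T_{\rm max}<\infty$ together with $\norm{u}_{L^\infty(0,T_{\rm max};L^\infty(\Omega))}+\norm{v}_{L^\infty(0,T_{\rm max};L^\infty(\Omega))}\le C$, the goal is to bound $\norm{u(\cdot,t)}_{W^{1,p}(\Omega)}+\norm{v(\cdot,t)}_{W^{1,p}(\Omega)}$ uniformly on $[0,T_{\rm max})$, which contradicts \eqref{BUC1} and thus forces the criterion.

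First, I fix $p>n$ and $\eta\in(\tfrac{1}{2},1)$, and for $\varepsilon>0$ sufficiently small consider $T_{\rm max}-\varepsilon<t<T_{\rm max}$. Applying \eqref{INTF11} restarted at time $T_{\rm max}-\varepsilon$, together with the smoothing estimate \eqref{NMNEMB} and the embedding \eqref{FRACEM1}, gives
\[
\norm{u(t)}_{W^{1,p}(\Omega)}\le K\norm{u(T_{\rm max}-\varepsilon)}_{W^{1,p}(\Omega)}+C\int_{T_{\rm max}-\varepsilon}^{t}e^{-\kappa(t-\tau)}(t-\tau)^{-\eta}\norm{g_{1}(\tau)}_{L^{p}(\Omega)}d\tau,
\]
and an analogous inequality for $v$. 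The crux is now to bound $\norm{g_{1}(\tau)}_{L^{p}(\Omega)}$ linearly in the unknowns.

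For this I would invoke Lemma~\ref{LEMKS1}. Since $|\Omega|<\infty$, the assumed $L^\infty$-bound gives $\norm{u(\tau)}_{L^1(\Omega)}+\norm{v(\tau)}_{L^1(\Omega)}\le C$, so the first inequality in Lemma~\ref{LEMKS1} yields $\norm{\calK[u,v](\tau)}_{L^\infty(\Omega)}\le C$; the second inequality yields $\norm{\calK[u,v](\tau)}_{W^{1,\infty}(\Omega)}\le C(\norm{u(\tau)}_{W^{1,p}(\Omega)}+\norm{v(\tau)}_{W^{1,p}(\Omega)})$. Expanding the drift term as $\nabla\cdot(u\calK[u,v])=\nabla u\cdot\calK[u,v]+u\,\nabla\cdot\calK[u,v]$ and using the $L^\infty$ bound on $u,v$ together with $W^{1,p}\hookrightarrow L^\infty$, I obtain
\[
\norm{g_{1}(\tau)}_{L^{p}(\Omega)}\le C+C\bigl(\norm{u(\tau)}_{W^{1,p}(\Omega)}+\norm{v(\tau)}_{W^{1,p}(\Omega)}\bigr),
\]
and the same estimate for $g_2$ (the extra linear term $mu$ is harmless).

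Substituting and adding the two inequalities, then taking the supremum over $(T_{\rm max}-\varepsilon,T_{\rm max})$, produces
\[
\sup_{T_{\rm max}-\varepsilon<t<T_{\rm max}}\bigl(\norm{u(t)}_{W^{1,p}(\Omega)}+\norm{v(t)}_{W^{1,p}(\Omega)}\bigr)\le K'+C\varepsilon^{1-\eta}\sup_{T_{\rm max}-\varepsilon<t<T_{\rm max}}\bigl(\norm{u(t)}_{W^{1,p}(\Omega)}+\norm{v(t)}_{W^{1,p}(\Omega)}\bigr),
\]
where $K'$ depends only on the $W^{1,p}$ norms at time $T_{\rm max}-\varepsilon$, which are finite by Lemma~\ref{LEM4}. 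Choosing $\varepsilon$ so that $C\varepsilon^{1-\eta}\le\tfrac{1}{2}$ absorbs the supremum into the left-hand side and yields a uniform $W^{1,p}$ bound up to $T_{\rm max}$, contradicting \eqref{BUC1}. The only step needing care is ensuring that Lemma~\ref{LEMKS1} places all $W^{1,p}$-dependence linearly (in particular that $\nabla\cdot\calK[u,v]$ does not degrade the estimate), and this is precisely what the second inequality of that lemma provides; without it the small-$\varepsilon$ absorption trick would fail. The absence of a nonlocal Robin trace, compared to the proof of Lemma~\ref{BUCLEM}, makes this argument strictly shorter.
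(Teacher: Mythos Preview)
Your argument is correct and matches the paper's approach: the paper omits a separate proof of Lemma~\ref{BUCLEM1} and simply refers to the proof of Lemma~\ref{BUCLEM}, which is precisely the adaptation you carry out---replacing the generalized variation-of-constants formula by the Neumann semigroup representation \eqref{INTF11}, the smoothing estimate \eqref{NONEST} by \eqref{NMNEMB}, and Lemma~\ref{LEMKS} by Lemma~\ref{LEMKS1}. Your observation that the linear $W^{1,p}$-dependence of $\nabla\cdot\mathcal{K}[u,v]$ now comes from the second inequality in Lemma~\ref{LEMKS1} (rather than from an $L^1$ bound as in Lemma~\ref{LEMKS}) is the one genuine adjustment needed, and you handle it correctly.
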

\begin{lemma}\label{LEML11}
Let the same assumptions as in Lemma~\ref{LEM4} be satisfied.  The solution  $(u,v)$ of   \eqref{NPBV1} given by Lemma~\ref{LEM4} for $T<T_{\rm max}$ satisfies
\begin{equation}\label{UL11}
\sup_{t\le T}\int_{\Omega}u(\cdot,t)\le C(\norm{u_{0}}_{L^{1}(\Omega)}),
\end{equation}
\begin{equation}\label{VL11}
\sup_{t\le T}\int_{\Omega}v(\cdot,t)\le C(\norm{u_{0}}_{L^{1}(\Omega)}, \norm{v_{0}}_{L^{1}(\Omega)}),
\end{equation}
\begin{equation}\label{KLINF1}
\sup_{t\le T} \norm{\mathcal{K}[u,v](\cdot, t)}_{L^{\infty}(\Omega)}\le C(\norm{u_{0}}_{L^{1}(\Omega)},  \norm{v_{0}}_{L^{1}(\Omega)}),
\end{equation}
and
\begin{equation}\label{SLINF1}
\sup_{t\le T} \norm{\mathcal{S}[u,v](\cdot, t)}_{L^{\infty}(\Omega)}\le C(\norm{u_{0}}_{L^{1}(\Omega)},  \norm{v_{0}}_{L^{1}(\Omega)}).
\end{equation}
\end{lemma}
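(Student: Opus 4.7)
The plan is to mirror the proof of Lemma~\ref{LEML1} almost verbatim, since the zero-zero flux boundary condition makes all divergence terms disappear just as cleanly as in the nonlocal Robin case (in fact more so: both fluxes vanish individually, not merely in combination). First I would integrate $\eqref{MODEL11}_1$ over $\Omega$ and use the divergence theorem. The $-\Delta u$ term contributes $-\int_{\partial\Omega}\partial_\nu u = 0$ since $\partial_\nu u = 0$, and $\nabla\cdot(u\,\mathcal{K}[u,v])$ contributes $\int_{\partial\Omega} u\,\mathcal{K}[u,v]\cdot\nu = 0$ since $\mathcal{K}[u,v]=0$ on $\partial\Omega$. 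This yields exactly
\[
\frac{d}{dt}\int_\Omega u + \frac{\lambda}{k}\int_\Omega u(u+v) = (\lambda-m)\int_\Omega u,
\]
which is identical to \eqref{LEML1_1}. A single Young/Cauchy-Schwarz step bounds $(|\lambda-m|+1)\int_\Omega u$ by $\frac{\lambda}{k}\int_\Omega u^2 + C$, leaving a scalar ODE $y'+y\le C$ with $y(t)=\int_\Omega u(\cdot,t)$; Gr\"onwall's lemma then gives \eqref{UL11}.

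Repeating the same procedure on $\eqref{MODEL11}_2$ produces
\[
\frac{d}{dt}\int_\Omega v + \frac{\mu}{k}\int_\Omega v(u+v) = m\int_\Omega u + \mu\int_\Omega v.
\]
Absorbing $(\mu+1)\int_\Omega v \le \frac{\mu}{k}\int_\Omega v^2 + C$ and then using the already-established bound \eqref{UL11} to control $m\int_\Omega u$ yields $\frac{d}{dt}\int_\Omega v + \int_\Omega v \le C(\|u_0\|_{L^1(\Omega)})$, and another application of Gr\"onwall gives \eqref{VL11}.

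Finally, \eqref{KLINF1} and \eqref{SLINF1} follow immediately from Lemma~\ref{LEMKS1}, whose first inequality
\[
\|\mathcal{K}[f,g]\|_{L^\infty(\Omega)} + \|\mathcal{S}[f,g]\|_{L^\infty(\Omega)} \le C(R,\Omega)\bigl(\|f\|_{L^1(\Omega)} + \|g\|_{L^1(\Omega)}\bigr),
\]
is precisely what is needed: plugging in $f=u(\cdot,t)$, $g=v(\cdot,t)$ and taking the supremum over $t\le T$ converts \eqref{UL11}--\eqref{VL11} directly into \eqref{KLINF1}--\eqref{SLINF1}. There is no real obstacle here; the only point worth flagging is that, in contrast to the nonlocal Robin case, we do not need to introduce any extension $\mathcal{N}$ of the normal field, nor does the adhesion flux contribute to the $L^1$ balance, since the decoupled conditions $\partial_\nu u=\partial_\nu v=0$ and $\mathcal{K}=\mathcal{S}=0$ on $\partial\Omega$ eliminate both boundary integrals separately.
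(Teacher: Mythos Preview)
Your proposal is correct and matches the paper's approach exactly: the paper does not write out a separate proof for Lemma~\ref{LEML11} but simply refers back to the proof of Lemma~\ref{LEML1}, which is precisely what you have reproduced, with the only cosmetic difference being that you invoke Lemma~\ref{LEMKS1} (the independent-case analogue of Lemma~\ref{LEMKS}) for \eqref{KLINF1}--\eqref{SLINF1}. Your remark that the zero-zero flux condition kills the boundary terms individually is accurate and is the sole place where the argument differs from the Robin case, though the resulting $L^{1}$ identities \eqref{LEML1_1}--\eqref{LEML1_2} are identical.
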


\begin{lemma}\label{LEMLINFUV1}
Let the same assumptions as in Lemma~\ref{LEM4} be satisfied.  The solution  $(u,v)$ of   \eqref{NPBV1} given by Lemma~\ref{LEM4} for $T<T_{\rm max}$ satisfies
\begin{equation}\label{ULINFTY1}
\sup_{t\le T}\norm{u(\cdot,t)}_{L^{\infty}(\Omega)}\le  C(\norm{u_{0}}_{(L^{1}\cap L^{\infty})(\Omega)}, \norm{v_{0}}_{L^{1}(\Omega)}),
\end{equation}
and
\begin{equation}\label{VLINFTY1}
\sup_{t\le T}\norm{v(\cdot,t)}_{L^{\infty}(\Omega)}\le  C(\norm{u_{0}}_{(L^{1}\cap L^{\infty})(\Omega)}, \norm{v_{0}}_{(L^{1}\cap L^{\infty})(\Omega)}).
\end{equation}
\end{lemma}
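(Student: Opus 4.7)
\textbf{Proof plan for Lemma \ref{LEMLINFUV1}.}
The proof will follow the template of Lemma~\ref{LEMLINFUV}, replacing the role of the nonlocal Robin boundary condition by the homogeneous Neumann condition $\partial_{\nu}u=\partial_{\nu}v=0$ together with the vanishing of $\mathcal{K}[u,v]$ and $\mathcal{S}[u,v]$ on $\partial\Omega$ built into~\eqref{sensing2}. Thus the main ingredients are the $L^1$ bounds \eqref{UL11}--\eqref{VL11} and the $L^\infty$ bounds \eqref{KLINF1}--\eqref{SLINF1} from Lemma~\ref{LEML11}, which let us treat the drift terms $u\mathcal{K}[u,v]$ and $v\mathcal{S}[u,v]$ as mild convection.

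First I would bound $u$ in $L^\infty$. Multiplying the $u$-equation by $u^{p-1}$ and integrating over $\Omega$, the boundary contribution from $-\Delta u$ vanishes since $\partial_\nu u=0$, and the boundary contribution from $-\nabla\cdot(u\mathcal{K}[u,v])$ vanishes since $\mathcal{K}[u,v]=0$ on $\partial\Omega$. This produces the standard identity
\begin{equation*}
\frac{1}{p}\frac{d}{dt}\int_\Omega u^p+\frac{4(p-1)}{p^2}\int_\Omega|\nabla u^{p/2}|^2+\frac{\lambda}{k}\int_\Omega u^p(u+v)=(\lambda-m)\int_\Omega u^p+(p-1)\int_\Omega u^{p-1}\nabla u\cdot\mathcal{K}[u,v].
\end{equation*}
Using \eqref{KLINF1} and Young's inequality to absorb $(p-1)\int_\Omega u^{p-1}\nabla u\cdot\mathcal{K}[u,v]$ into the dissipation yields
\begin{equation*}
\frac{d}{dt}\int_\Omega u^p+\frac{3(p-1)}{p}\int_\Omega|\nabla u^{p/2}|^2\le C_6\,p(p+1)\int_\Omega u^p,
\end{equation*}
with $C_6$ independent of $p$, exactly mirroring~\eqref{MA_1}. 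At this point the Moser--Alikakos iteration \cite{A79} applies verbatim as in the proof of Lemma~\ref{LEMLINFUV}: interpolating $\|u^{p/2}\|_{L^2}^2$ against the gradient via Gagliardo--Nirenberg and $\|u^{p/2}\|_{L^1}$, then iterating along $p_k=2^k$ using \eqref{UL11}, delivers \eqref{ULINFTY1}.

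For the $v$-equation the same procedure goes through, with the extra forcing term $m\int_\Omega uv^{p-1}$ estimated by Young's inequality and the $L^\infty$ bound on $u$ just obtained, and with $\mathcal{S}[u,v]$ controlled via \eqref{SLINF1}. This yields an inequality of the form \eqref{MA_33} and the same iteration, now using \eqref{VL11}, gives \eqref{VLINFTY1}. No obstacle is qualitatively different from the Robin case; the only point that deserves care is checking that the boundary integrals arising from integration by parts really do vanish, which is where the definition of $E(x)$ in \eqref{sensing2} together with Lemma~\ref{LEMKS1} are essential. The iteration argument itself, while computationally the longest part, is routine once the differential inequality with a $p$-independent constant is in place.
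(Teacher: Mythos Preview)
Your proposal is correct and follows exactly the approach the paper intends: the paper does not write out a separate proof of Lemma~\ref{LEMLINFUV1} but simply refers back to the proof of Lemma~\ref{LEMLINFUV}, and your reconstruction of how that argument transfers to the zero--zero flux case (with the boundary terms vanishing separately via $\partial_\nu u=0$ and $\mathcal{K}[u,v]|_{\partial\Omega}=0$ rather than via cancellation as in the Robin case) is precisely what is needed.
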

\end{subsection}
\begin{pfthm2}
It is a direct consequence of   local-in-time existence, uniqueness, non-negativity (Lemma~\ref{LEM4}), the blow-up criterion (Lemma~\ref{BUCLEM1}), and a priori estimates (Lemma~\ref{LEMLINFUV1}). This completes the proof.
\end{pfthm2}

\begin{remark} 
If the adhesive strength of    $v$ is negligible, $\mathcal{S}[u,v]=0$, and  the growth rate of $u$ is smaller than that of $v$, $\lambda<\mu$, 
then the solution asymptotically tends to constant equilibrium. Indeed, the solution $(u,v)$ of    \eqref{MODEL11}--\eqref{omega2} with $\inf_{\overline{\Omega}}v_0>0$ given by Theorem~\ref{GETHM1}  satisfy
\[
\frac{d}{dt}\bkt{ \frac{1}{\lambda}\int_{\Omega}u +\frac{1}{\mu}\int_{\Omega}v -\frac{k}{\mu}\int_{\Omega}\log v   }
+\frac{k}{\mu}\int_{\Omega} \biggr{|}\frac{\nabla v}{v}  \biggr{|}^{2}+\frac{mk}{\mu}\int_{\Omega}\frac{u}{v}+m\bke{ \frac{1}{\lambda}-\frac{1}{\mu}  }\int_{\Omega}u+\frac{1}{k}\int_{\Omega}|u-(k-v)|^{2}
\]
\[
=\frac{k}{\mu}\int_{\Omega}\mathcal{S}[u,v]\frac{\nabla v}{v}.
\]
Thus, one can    verify that $\int_{\Omega}u(\cdot,t)\rightarrow 0$ and  $\int_{\Omega}|u-(k-v)|^2(\cdot,t)\rightarrow 0$ as time tends to infinity whenever $\mathcal{S}[u,v]=0$ and $\lambda<\mu$. Then, by     $u\ge0$, we have     $(u,v)\rightarrow(0,k)$.\end{remark}

%

\section*{Appendix}
Let $\Om, \calK, \calS$ and $\om$ be given as in  Section $3$. We define the operator 
$ F: W^{2,p}_B \times  W^{2,p}_B \to L^2\times L^2$ by 
\begin{align*}
F(u,v)&= (F_1(u,v), F_2(u,v)),\\
F_1(u,v)&= \Del u-\nabla \cdot (u \,\mathcal{K}[u,v]     )+(1-m)u +\displaystyle\frac{\lambda}{k} u(k-(u+v)) \\
F_2(u,v)&= \Del v-\nabla \cdot (v \,\mathcal{S}[u,v]     )+mu+v +\displaystyle\frac{\mu}{k} v(k-(u+v)).
\end{align*}
We denote the G\^ateaux derivative of $F$ at $U=(u,v)$ by $T_{U}$;\\
\[ T_{U}(W) = \lim_{t\to 0} \frac{F(U+tW)- F(U)}{t} = (\delta_W F_1(U), \delta_W F_2(U))\] where
$ W= (w, z)
 \in  W^{2,p}_B \times  W^{2,p}_B$.
 By computation we have
 \begin{align*}
 \delta_WF_1(0,k)& = \Del w-mw \\
 \delta_WF_2(0,k)& =  \Del z - \na \cdot( k\calS[w,0]+kS[w,z]+z\calS[0,k]) + (m-\mu)w - \mu z\\
 \delta_WF_1(0,0)& = \Del w - mw\\
 \delta_WF_2(0,0)& = \Del z + mw + \mu z.
 \end{align*}
 We consider the two linearized equations at $(0,k)$ and $(0,0)$ respectively with  initial data 
 $(w_0, z_0)\in W^{2,p}_B \times  W^{2,p}_B$;
 \begin{align}\label{L1}
 \begin{aligned}
 \pa_t w &=  \Del w-mw\\
 \pa_t z &= \Del z - \na \cdot( k\calS[w,0]+kS[w,z]+z\calS[0,k]) + (m-\mu)w - \mu z,
 \end{aligned}
 \end{align}
 and
  \begin{align}\label{L2}
 \begin{aligned}
 \pa_t w &=  \Del w-mw \\
 \pa_t z &=  \Del z + mw + \mu z. \phantom{ \na \cdot( k\calS[w,0]+kS[w,z]+z\calS[0,k]) + m+m}
 \end{aligned}
 \end{align}
 The equations \eqref{L1}, \eqref{L2} are decoupled and it is immediate that
 \begin{align}\label{apeq1}
 \| w\|_{W^{1,p}(\Om)} \le e^{-mt}\|w_0\|_{W^{1,p}}, \quad p\ge 1
 \end{align}
 from 
 \[ \pa_t(e^{m t} w ) = \Del (e^{m t} w ).\]
 Let $ \tz$ denote $e^{\mu t} z$.
 Multiplying to the $z$- equation of \eqref{L1} by ${e^{\mu t}}$, we have
 \begin{align}\label{apeq3}
 \pa_t \tz - \Del \tz = -\na \cdot( 2k\calS[e^{\mu t}w,0]+kS[0,\tz]+ \tz\calS[0,k]) + (m-\mu) e^{\mu t}w.
 \end{align} 
 It holds that
 \[ \ddt \into |\tz |\le  |m-\mu| e^{\mu t} \int |w| \le   |m-\mu| e^{-(m-\mu)t} \| w_0\|_{L^1(\Om)},\]
 which implies
 \begin{align*}
 \into |\tz |& \le \into |z_0| -   \frac{|m-\mu|}{m-\mu}(e^{-(m-\mu)t}-1)\into |w_0| 
 \end{align*}
 and 
 \begin{align}
  \into |z |& \le e^{-\mu t} \into |z_0| +   (e^{-mt}- e^{-\mu t})\into |w_0| \label{apeq111},
 \end{align}
 where we abuse the notation by 
 $ | m-\mu|/(m-\mu) =0$ if $ \mu=m$. When $m > \mu$,  it holds that 
 \begin{align}\label{apeq1111}
 \int | \tz| \le  \| z_0\|_{L^1(\Om)} + \| w_0\|_{L^1(\Om)}.
 \end{align}
 \indent 
In what follows we find that 
 the different signs of $\mp \mu z$ in \eqref{L1} and \eqref{L2} imply that $(0, k)$ is linearly stable and $(0,0)$ is linearly unstable as expected. 
 \begin{proposition}\label{stab}
  Each of the linearized equations  \eqref{L1}, \eqref{L2} have a unique global solution $(w,z)$ for each  in 
 \[C([0,t); W^{1,p}(\Omega))\cap W^{1,p}(0,t; L^{p}(\Omega))\cap L^{p}(0,t;W^{2,p}(\Omega))\]
 for any $t>0$. When $m> \mu$, the solution $(w,z)$ for \eqref{L1} is asymptotically stable such that
 \begin{align}\label{appeq2}
 \|z\|_{L^p(\Om) }\le e^{-\mu t}\| z_0\|_{L^p(\Om)} \quad \mbox{ for } p\ge 1.
 \end{align}
 The solution  $(w,z)$ for \eqref{L2} grows exponentially in its $L^1$-norm if the initial data is non-negative;
 \begin{align}\label{appeq3}
 \int_\Om |z| \ge e^{\mu t}\int_\Om |z_0|.
 \end{align}
 \end{proposition}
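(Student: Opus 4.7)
The plan is to treat the three assertions separately, leveraging the triangular structure---the $w$-equation decouples from $z$---common to both \eqref{L1} and \eqref{L2}. For existence and uniqueness, I would first observe that $\pa_t w = \Del w - mw$ with $\pa_\nu w = 0$ is a linear heat equation with constant zero-order absorption; standard semigroup and maximal-regularity theory (\cite{LSU88}) deliver a unique global $w$ in the asserted class, together with \eqref{apeq1} via an $L^p$-energy test against $|w|^{p-2}w$. The $z$-equation is then linear in $z$ with source determined by $w$, and Lemma~\ref{LEMKS1} provides the required continuity of the nonlocal operator $\calS[\cdot,\cdot]$. The fixed-point construction of Lemma~\ref{LEM4} then applies essentially verbatim (and in fact more easily, since linearity in $z$ rules out finite-time blow-up of the $W^{1,p}$-norm), yielding a global $z$ in the prescribed function space.

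For the decay \eqref{appeq2}, the case $p=1$ is essentially already contained in \eqref{apeq111}--\eqref{apeq1111}: passing to $\tz = e^{\mu t}z$, the zero boundary flux $\calS[\cdot,\cdot]\cdot\nu = 0$ on $\pa\Om$, inherited from the sensing-domain geometry \eqref{sensing2}, kills the divergence contributions when integrating $|\tz|$; the forcing $(m-\mu)e^{\mu t}w$ is then controlled by $|m-\mu|\, e^{-(m-\mu)t}\|w_0\|_{L^1}$ through \eqref{apeq1}, and time integration together with $m>\mu$ completes the $p=1$ bound. For $p>1$ I would test the $z$-equation of \eqref{L1} against $|z|^{p-2}z$: $\pa_\nu z = 0$ together with the nonlocal zero-flux condition removes all boundary terms, Young's inequality absorbs the gradient cross-term into the dissipation $-(p-1)\int |z|^{p-2}|\na z|^2$, and the $-\mu z$ term supplies the decay $-\mu\int |z|^p$. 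Because the $w$-driven contributions inherit $\|w(t)\|_{L^{\infty}} \le C e^{-mt}$ (from \eqref{apeq1} combined with $W^{1,p}\hookrightarrow L^\infty$ for $p>n$), they decay strictly faster than $e^{-\mu t}$ when $m>\mu$, and a Gronwall argument produces \eqref{appeq2}.

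For the growth \eqref{appeq3}, positivity propagates by the parabolic maximum principle: $w_0\ge 0$ with the $w$-equation forces $w\ge 0$, and then $\pa_t z - \Del z - \mu z = mw \ge 0$ with $z_0\ge 0$ and Neumann data forces $z\ge 0$. Setting $\hat z := e^{-\mu t}z$ converts the $z$-equation of \eqref{L2} into $\pa_t \hat z - \Del \hat z = m e^{-\mu t}w$, and integrating over $\Om$ with the Neumann condition gives $\ddt \into \hat z = m e^{-\mu t}\into w \ge 0$. Hence $\into \hat z(t) \ge \into z_0$, which rearranges to $\into z(t) \ge e^{\mu t}\into z_0$; combined with $z \ge 0$ and $z_0 \ge 0$ this is precisely \eqref{appeq3}. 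The main obstacle is the $p>1$ case of \eqref{appeq2}: one must ensure that neither the nonlocal drift terms (which contain $z$ itself) nor the $w$-driven forcing spoil the clean rate $e^{-\mu t}$ in the $L^p$-norm, and this requires combining in an essential way the zero boundary flux of $\calS$ from the shrinking sensing domain, the $W^{1,\infty}$-bound of Lemma~\ref{LEMKS1}, and the strict gap $m>\mu$ which makes the $w$-contribution a genuinely lower-order perturbation.
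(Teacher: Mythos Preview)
Your treatment of global well-posedness and of the instability estimate \eqref{appeq3} is correct and essentially coincides with the paper's argument. The gap is in the $p>1$ case of \eqref{appeq2}. A direct $L^p$-energy test on the $z$-equation followed by Gr\"onwall, as you propose, does not deliver the clean rate $e^{-\mu t}$: after you absorb the cross-term $(p-1)\int_\Om |z|^{p-2}\na z\cdot k\calS[0,z]$ into the dissipation via Young's inequality, the leftover is of size $C\|\calS[0,z]\|_{L^\infty(\Om)}^2\int_\Om|z|^{p-2}\le C\|z\|_{L^1(\Om)}^2\int_\Om|z|^{p-2}$. Feeding in the $L^1$-decay $\|z\|_{L^1(\Om)}\le Ce^{-\mu t}$ produces, already for $p=2$, a constant source in the differential inequality for $e^{2\mu t}\|z\|_{L^2(\Om)}^2$, and Gr\"onwall then yields only $\|z\|_{L^2(\Om)}\le Ce^{-\mu t}(1+t)$. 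The $w$-driven contributions are indeed lower order thanks to $m>\mu$, as you correctly observe, but the $z$-dependent part of the nonlocal drift is not, and the strict gap $m>\mu$ plays no role for that term.

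The paper closes this by passing to $\tz=e^{\mu t}z$ and running the Moser--Alikakos iteration of Lemma~\ref{LEMLINFUV} on the $\tz$-equation \eqref{apeq3}: the uniform bound $\|\tz\|_{L^1(\Om)}\le C$ from \eqref{apeq1111} together with
\[
\frac{1}{p}\frac{d}{dt}\int_\Om|\tz|^p+\frac{3(p-1)}{p^2}\int_\Om\bigl|\na\tz^{p/2}\bigr|^2\le C+C\int_\Om|\tz|^p
\]
is bootstrapped along $p_k=2^k$ via Gagliardo--Nirenberg to a time-uniform bound on $\|\tz\|_{L^\infty(\Om)}$, which yields \eqref{appeq2}. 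The essential point your outline misses is that the residual dissipation $\int_\Om|\na\tz^{p/2}|^2$ must be kept and exploited through the interpolation step, not spent entirely on absorbing the drift.
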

 \begin{proof}
 Due to the a priori estimates \eqref{apeq1} and \eqref{apeq111}, the global well-posedness part  for  \eqref{L1} follows from the same argument in the subsection $2.3$ or the subsection $3.2$. Repeating the argument of  Lemma \ref{LEMLINFUV} to \eqref{L1} it  holds that 
   \begin{equation}\label{MA-z}
   \| z\|_\iom \le C(\|w_0\|_{L^1\cap L^{\infty}(\Om)}, \|z_0\|_{L^1\cap L^{\infty}(\Om)}).
   \end{equation}
   For details see \eqref{tz}--\eqref{tzinf} for $\tz$, where the similar estimates are given.
   By Lemma \ref{BUCLEM1}  it also holds that 
   \begin{equation}\label{bw}
   \| z\|_{W^{1,p}} \le C(\|w_0\|_{L^1\cap L^{\infty}(\Om)}, \|z_0\|_{L^1\cap L^{\infty}(\Om)} )
   \end{equation}
   for any $p\ge 1$.
    Let us prove  \eqref{appeq3} first.  The solution $(w,z)$ remains non-negative  and we have
    \begin{align*}
    \into w& = e^{-mt}\into w_0,\\
    \ddt(e^{-\mu t} \into z) & = m e^{-(\mu+ m)t} \into w_0.
    \end{align*}
    Integrating the second equation, we have  \eqref{appeq3}. 
    \\
    \indent
    For \eqref{appeq2} we proceed as in Lemma \ref{LEMLINFUV}. Multiplying $|\tz|^{p-2}\tz$
    into \eqref{apeq3} for $p \ge2$, we have
    \begin{align}\label{tz}
    \frac 1p \ddt \into \tzp + \frac{4(p-1)}{p^2}\into |\na \tz^{\frac p2}|^2 = &
    \into |\tz|^{p-2}\tz \na \cdot( 2k\calS[e^{\mu t}w,0]+kS[0,\tz]+ \tz\calS[0,k])  \\
    & + (m-\mu) \into e^{\mu t}w |\tz|^{p-2}\tz.
   \end{align}
   By Lemma \ref{LEMKS1}, \eqref{apeq1},  \eqref{bw} and using $m >\mu$, we have
    \begin{align*}
   & \| \na \cdot \calS [ \emu w, 0]\|_\iom \le C\| \emu w\|_{W^{1,q}(\Om)} \le C \|w_0\|_{W^{1,q}(\Om)} (q >n)\\
   & \|  \calS[ 0, \tz] \|_\iom \le C\| \tz\|_{L^1(\Om)} 
 \le 
    C(\|w_0\|_{L^1}, \|z_0\|_{L^1})\\
  &  \| \na \calS[0, k]\|_\iom \le C
    \end{align*}
    and estimate the right hand side of \eqref{tz} as follows,
    \begin{align*}
   &\into |\tz|^{p-2}\tz \na \cdot( 2k\calS[e^{\mu t}w,0]) + (m-\mu) \into e^{\mu t}w |\tz|^{p-2}\tz  \le C\into \tzpp \\
    &\into |\tz|^{p-2}\tz \na \cdot kS[0,\tz]  \le \frac{p-1}{p^2} \into | \na \tz^{\frac p2|}|^2 + C(p-1) \int \tz^{p-2}
    \|  \calS[0, \tz\|_{\iom}^2 \\
  & \phantom{\into |\tz|^{p-2}\tz \na \cdot kS[0,\tz] } \le \frac{p-1}{p^2} \into | \na \tz^{\frac p2}|^2 + C \frac{p-1}{p} \left(|\Om| +(p-2)\into \tzp \right),\\
  & \into |\tz|^{p-2}\tz \na \cdot( \tz\calS[0,k]) \le \frac{1}{p^2} \into  | \na \tz^{\frac p2}|^2  + C\into \tzp \| \na \calS[0, k]\|_{\iom}.
   \end{align*}
   Summing up, we have
   \begin{align*}
    \frac 1p \ddt \into \tzp + \frac{3(p-1)}{p^2}\into |\na \tz^{\frac p2}|^2  \le C+ C\into \tzp, \quad p\ge 2,
   \end{align*}
   where $C$ is a uniform constant depending on $\| w_0\|_{L^1(\Om)}$, $\|z_0\|_{L^1(\Om)}$, and given constants $\mu, m, k$
   etc..  As was derived from \eqref{MA_33} for $v$ in Lemma \ref{LEMLINFUV} it holds that
   \[\sup_ {0 <t \le T} \| \tz\|_{L^{p_k}(\Om)} \le C(\|z_0\|_{L^1(\Om)}, \|z_0\|_{\iom}) \sup_ {0 <t \le T}\left(\| \tz \|_{L^1(\Om)} 
   +C \right)\quad p_k= 2^k, k=0, 1\dots.\]
and 
\begin{align}\label{tzinf} \sup_ {0 <t \le T} \| \tz\|_\iom \le  C(\|w_0\|_{L^1\cap L^{\infty}(\Om)}, \|z_0\|_{L^1\cap L^{\infty}(\Om)}).
\end {align}
That implies \eqref{appeq2}.
\end{proof}

\section*{Acknowledgement}
Jaewook Ahn's work is supported by NRF-2018R1D1A1B07047465. Jihoon Lee's work is supported by SSTF-BA1701-05.
Myeongju Chae's work is supported by NRF-2018R1A1A 3A04079376.


\end{document}